\font \sevenrm=cmr7
\font \fiverm=cmr5
\newcommand{\nc}{\newcommand}
\newtheorem{theorem}{Theorem}
\newtheorem{definition}{Definition}
\newtheorem{corollary}{Corollary}
\newtheorem{proposition}{Proposition}
\newtheorem{ex}{Example}
\newtheorem{remark}{Remark}
\nc{\comment}[1]{[[{\tt #1}]] }
\nc{\Cal}[1]{{\mathcal {#1}}}
\nc{\mop}[1]{\mathop{\hbox {\rm #1} }\nolimits}
\nc{\gmop}[1]{\mathop{\hbox {\bf #1} }\nolimits}
\nc{\smop}[1]{\mathop{\hbox {\sevenrm #1} }\nolimits}
\nc{\ssmop}[1]{\mathop{\hbox {\fiverm #1} }\nolimits}
\nc{\mopl}[1]{\mathop{\hbox {\rm #1} }\limits}
\def\dbar{d\hskip-3pt \raise 4pt\hbox{-}}
\nc{\smopl}[1]{\mathop{\hbox {\sevenrm #1} }\limits}
\nc{\ssmopl}[1]{\mathop{\hbox {\fiverm #1} }\limits}
\nc{\frakg}{{\frak g}}
\nc{\g}[1]{{\frak {#1}}}
\def \restr#1{\mathstrut_{\textstyle |}\raise-6pt\hbox{$\scriptstyle #1$}}
\def \srestr#1{\mathstrut_{\scriptstyle |}\hbox to
-1.5pt{}\raise-4pt\hbox{$\scriptscriptstyle #1$}}
\nc{\wt}{\widetilde} \nc{\wh}{\widehat}
\nc{\redtext}[1]{\textcolor{red}{#1}}
\nc{\bluetext}[1]{\textcolor{blue}{#1}}
\nc\fleche[1]{\mathop{\hbox to #1 mm{\rightarrowfill}}\limits}
\nc{\ignore}[1]{}
\def\semi{\mathrel{\times}\kern -.85pt\joinrel\mathrel{\raise
1.4pt\hbox{${\scriptscriptstyle |}$}}}
\nc\R{{\mathbb R}}
\nc\N{{\mathbb N}}
\nc\inver{^{-1}}
\nc\point{\hbox{\bf .}}
\nc\un{\hbox{\bf 1}}
\nc\ootimes{\overline\otimes}
\let\precrel\prec
\let\succrel\succ
\renewcommand{\prec}{\left.\precrel\right.}
\renewcommand{\succ}{\left.\succrel\right.}
\def\shu{\joinrel{\!\scriptstyle\amalg\hskip -3.1pt\amalg}\,}
\def\sshu{\scalebox{0.8}{$\shu$}}
\def\diagramme #1{\vskip 4mm \centerline {#1} \vskip 4mm}
\begin{document}
\title{
{Shuffle quadri-algebras and concatenation}}        
\author{Mohamed Belhaj Mohamed}
\address{{Mathematics Departement, Sciences college, Taibah University, Kingdom of Saudi Arabia~.}\vspace{0.01cm}
{Laboratoire de math\'ematiques physique fonctions sp\'eciales et applications, Universit\'e de Sousse, rue Lamine Abassi 4011 H. Sousse,  Tunisie.}}     
         \email{mohamed.belhajmohamed@isimg.tn}
 
\author{Dominique Manchon}
\address{Universit\'e Blaise Pascal,
       C.N.R.S.-UMR 6620,
        63177 Aubi\`ere, France}       
      \email{manchon@math.univ-bpclermont.fr}
        \urladdr{http://math.univ-bpclermont.fr/~manchon/}     
\date{June 2018}
\noindent{\footnotesize{${}\phantom{a}$ }}
\begin{abstract}
In this article, we study the shuffle quadri-algebra over some vector space. We prove the existence of some relations between the four quadri-algebra laws which constitute the shuffle product, the concatenation product and the deconcatenation coproduct. We also show that the shuffle quadri-algebra admits two module-algebra structures on itself endowed with the underlying associative algebra structure.
\end{abstract}
\maketitle
\textbf{MSC Classification}: 05E40, 17A30.

\textbf{Keywords}: quadri-algebra, dendriform algebra, shuffle, concatenation, module-algebra.
\tableofcontents

\section{Introduction}
A dendriform algebra is a vector space equipped with an associative product which can be written as a sum of two operations   $\prec$ and $\succ$ called left and right respectively, which satisfy the three following rules:
\begin{eqnarray*}
 (x \prec y) \prec z &=& x \prec (y \prec z) + x \prec (y \succ z)\nonumber\\
(x \succ y) \prec z &=& x \succ (y \prec z)\\
(x \prec y) \succ z + (x \succ y) \succ z &=& x \succ(y \succ z) \nonumber
\end{eqnarray*} 

They were introduced by Jean-Louis Loday \cite[\S5]{jll} in $1995$ with motivation from algebraic K-theory and have been studied by other authors in different domains \cite{ma, kd, kudo, emp, jllr, lr, mr}.\\

In 2004, Marcelo Aguiar and Jean-Louis Loday introduced the notion of quadri-algebra in \cite{al}. A quadri-algebra is an associative algebra the multiplication of which can be decomposed as the sum of four operations $\searrow$, $\nearrow$, $\nwarrow$ and $\swarrow$ satisfying nine axioms. Two dendriform structures are attached to a quadri-algebra: the first dendriform structure is given by the two operations  $\succ$ and $\prec$ such that:
 \begin{eqnarray*}
 x \succ y &:=& x \nearrow y + x \searrow y\\ 
x \prec y &:=& x \nwarrow y + x \swarrow y,
\end{eqnarray*}
and the second is given by the two operations $\vee$ and $\wedge$ where: 
 \begin{eqnarray*}
x \vee y &:=&  x \searrow y + x \swarrow y \\
x \wedge y &:=&  x \nearrow y + x \nwarrow y. 
\end{eqnarray*}
Quadri-algebras were studied by Lo\"\i c Foissy  together with quadri-coalgebras and quadri-bialgebras \cite{folo}.\\

In this article we revisit the canonical example of shuffle quadri-algebra \cite{ght, fp, lr, re} treated by Marcelo Aguiar, Jean-Louis Loday and Lo\"\i c Foissy. We prove that there exist relations between the quadri-algebra laws, the concatenation product and the deconcatenation coproduct. We show that, for any elements $u, v, w$ in this quadri-algebra $\mathcal H$ we have:
\begin{eqnarray*}
 u \nearrow (v w) &=& \sum_{u = u^1 u^2} (u^1 \searrow v) ( u^2 \wedge w) \\
&=& \sum_{u = u^1 u^2} ( u^1 \succ v)  ( u^2 \nearrow w), 
\end{eqnarray*}
\begin{eqnarray*}
 u \searrow (v w) &=& \sum_{u = u^1 u^2} (u^1 \searrow v) ( u^2 \vee w) \\
&=& \sum_{u = u^1 u^2} ( u^1 \succ v)  ( u^2 \searrow w), 
\end{eqnarray*}
\begin{eqnarray*}
 u \swarrow (v w) &=& \sum_{u = u^1 u^2} (u^1 \swarrow v) ( u^2 \vee w) \\
&=& \sum_{u = u^1 u^2} ( u^1 \prec v)  ( u^2 \searrow w), 
\end{eqnarray*}
\begin{eqnarray*}
 u \nwarrow (v w) &=& \sum_{u = u^1 u^2} (u^1 \swarrow v) ( u^2 \wedge w) \\
&=& \sum_{u = u^1 u^2} ( u^1 \prec v)  ( u^2 \nearrow w)
\end{eqnarray*}
whenever these expressions make sense. We derive from these results a set of relations between the dendriform laws, the concatenation and the deconcatenation coproduct. We show that, for any $u, v, w \in \Cal H$, we have:
\begin{eqnarray*}
 u  \wedge ( v w) &=& u \nearrow (v w) +  u \nwarrow (v w)\\
 &=& \sum_{u = u^1 u^2} (u^1 \vee v) ( u^2 \wedge w), 
\end{eqnarray*}
\begin{eqnarray*}
u  \prec ( v w)  &=& u \nwarrow (v w) + u \swarrow (v w)\\
 &=& \sum_{u = u^1 u^2} (u^1 \prec v) ( u^2 \succ w),
\end{eqnarray*}
\begin{eqnarray*}
u  \vee ( v w) &=& u \swarrow (v w) + u \searrow (v w)  \\
&=& \sum_{u = u^1 u^2} (u^1 \vee v) ( u^2 \vee w),
\end{eqnarray*}
\begin{eqnarray*}
 u  \succ ( v w)  &=& u \searrow (v w) +  u \nearrow (v w) \\
&=& \sum_{u = u^1 u^2} (u^1 \succ v) ( u^2 \succ w),
\end{eqnarray*}
and consequently, two relations between the shuffle product, the concatenation and the deconcatenation coproduct. We show that, for any $u, v, w \in \Cal H$, we have:
\begin{eqnarray*}
 u \shu (v w) &=& \sum_{u = u^1 u^2} (u^1 \vee v) ( u^2 \shu w) \\
&=& \sum_{u = u^1 u^2} ( u^1 \shu v)  ( u^2 \succ w).
\end{eqnarray*}
At the end of this article, we prove the existence of two module-algebra structures on $\Cal H$ given by $\vee$ and $\succ$, and further compatibility relations. All these results are best expressed in terms of commutative diagrams involving an extended version of the tensor product.\\

\noindent \textbf{Acknowledgements}. We thank Lo\"\i c Foissy for his useful remarks, and the referee for a careful reading and numerous suggestions which greatly helped us to improve the presentation.
\section{A modified tensor product}
Let $V$ and $W$ be two vector spaces over a field $k$, let $\varepsilon_V:V\oplus k\to\hskip -3.5mm\to k$ be the projection onto the second component, and let us consider $\varepsilon_W$ similarly. We set:
\begin{equation}
V\ootimes W:=\mop{Ker}(\varepsilon_V\otimes\varepsilon_W)\simeq (V\otimes W)\oplus V\oplus W.
\end{equation}
This modified tensor product is symmetric and associative. We denote by $v+\lambda\un$ the generic element of $V\oplus k$. The generic element of $V\ootimes W$ can be written as
$$\sum_{k=1}^n v_k\otimes w_k+v\otimes\un + \un\otimes w,$$
where $v,v_1,\ldots,v_n\in V$ and $w,w_1,\ldots,w_n\in W$. For any pair of linear maps $f:V\to V'$ and $g:W\to W'$, there is a unique linear map $f\ootimes g:V\ootimes W\to V'\ootimes W'$ such that for any $v\in V$ and $w\in W$ we get
$$f\ootimes g(v\otimes w)=f(v)\otimes g(w),\hskip 8mm f\ootimes g(v\otimes\un)=f(v)\otimes\un,
\hskip 8mm f\ootimes g(\un\otimes w)=\un\otimes g(w).$$
These data turn the collection of $k$-vector spaces into a symmetric monoidal category. The unit for this tensor product $\ootimes$ is the zero-dimensional space $\{0\}$.\\

Recall that any associative algebra $V$ gives rise to a unital associative algebra $\overline V=V\oplus k\un$. As a consequence, the product $m:V\otimes V\to V$ is extended to a product from $\overline V\otimes \overline V$ into $\overline V$, and its restriction to $V\ootimes V$ takes value in $V$. Associativity of this extension can be described by the commutativity of the following diagram:
\diagramme{
\xymatrix{V\ootimes V\ootimes V\ar[rr]^{m\ootimes I}\ar[d]_{I\ootimes m} &&V\ootimes V\ar[d]^m\\
V\ootimes V\ar[rr]_m && V
}
}
In the same line of thought, a left module structure $\Phi:V\otimes M\to M$ yields an extension $\Phi:V\ootimes M\to M$ via
$$\Phi(\un\otimes m)=m,\hskip 12mm \Phi(v\otimes\un)=0,$$
making the following diagram commute:
\diagramme{
\xymatrix{V\ootimes V\ootimes M\ar[rr]^{m\ootimes I}\ar[d]_{I\ootimes \Phi} &&V\ootimes M\ar[d]^\Phi\\
V\ootimes M\ar[rr]_\Phi && M
}
}

and similarly for right modules. Dually, a coassociative coproduct $\wt\Delta:V\to V\otimes V$ can be modified to a co-unital coproduct $\Delta:\overline V\to \overline V\otimes \overline V$. Its restriction $\Delta:V\to V\ootimes V$ makes the following co-associativity diagram commute:
\diagramme{
\xymatrix{V\ar[rr]^\Delta\ar[d]_\Delta && V\ootimes V\ar[d]_{\Delta\ootimes I}\\
V\ootimes V\ar[rr]_{I\ootimes \Delta}&&V\ootimes V\ootimes V
}
}
Commutative diagrams for extended left and right comodule structures involving $\ootimes$ can be drawn accordingly. 
\section{Dendriform algebras}

A dendriform algebra is a vector space $D$ together with two operations $\prec : D \otimes D \longrightarrow D$ and $\succ : D \otimes D \longrightarrow D$, called left and right respectively, such that:
\begin{eqnarray}\label{dend}
 (x \prec y) \prec z &=& x \prec (y \prec z) + x \prec (y \succ z)\nonumber\\
(x \succ y) \prec z &=& x \succ (y \prec z)\\
(x \prec y) \succ z + (x \succ y) \succ z &=& x \succ(y \succ z). \nonumber
\end{eqnarray} 
Dendriform algebras were introduced in \cite[\S5]{jll}. See also \cite{ma, kd, kudo, emp, jllr, lr, mr} for additional work on this subject. Defining a new operation $\star$ by:
\begin{equation}\label{star}
x \star y  := x \prec y + x \succ y
\end{equation} 
permits us to rewrite axioms \eqref{dend} as:
\begin{eqnarray}\label{axioms-dend-bis}
 (x \prec y) \prec z &=& x \prec (y \star z)\nonumber\\
(x \succ y) \prec z &=& x \succ (y \prec z)\\
(x \star y) \succ  z &=& x \succ(y \succ z)\nonumber. 
\end{eqnarray} 

By adding the three relations we see that the operation $\star$ is associative. For this reason, a dendriform algebra may be regarded as an associative algebra $(D, \star)$ for which the multiplication $\star$ can be decomposed as the sum of two coherent operations.\\

\noindent It is standard to extend the dendriform operations $\prec$ and $\succ$ to $D\ootimes D$ by setting:
\begin{equation}\label{dend-ext}
a  \succ \un = 0, \hskip 6mm  \un \succ a = a, \hskip 6mm\un \prec a = 0, \hskip 6mm  a \prec \un = a
\end{equation}
for any $a\in D$. The space $\overline D:=D\oplus k\un$ is called somewhat incorrectly a unital dendriform algebra, although $\un\prec\un$ and $\un\succ \un$ are not defined. Let us however remark that $\star=\prec+\succ$ can be extended to $\overline D\otimes\overline D$ , making $\overline D$ a unital associative algebra. The extension \eqref{dend-ext} is consistent with the dendriform axioms \eqref{axioms-dend-bis} in the sense that each of the three axioms makes sense provided both members of the equality are defined.

\section{Quadri-algebras}
In this section, we use definitions and results on quadri-algebra structures given by Marcelo Aguiar and Jean-Louis Loday in \cite{al} and Lo\"\i c Foissy in \cite{folo}. A quadri-algebra structure consists in splitting an associative product into four operations, which in turn gives rise to two distinct dendriform structures.
\begin{definition}
 A quadri-algebra is a vector space $Q$ together with four operations:
$$\searrow, \nearrow, \nwarrow \text{and} \swarrow : Q\otimes Q \longrightarrow Q,$$
satisfying the nine axioms below. In order to state them, consider the following operations:
\begin{eqnarray}
 x \succ y &:=& x \nearrow y + x \searrow y\\ 
x \prec y &:=& x \nwarrow y + x \swarrow y \\
x \vee y &:=&  x \searrow y + x \swarrow y \\
x \wedge y &:=&  x \nearrow y + x \nwarrow y 
\end{eqnarray} 
and:
\begin{eqnarray}\label{dra}
 x \star y &:=& x \nearrow y + x \searrow y + x \nwarrow y + x \swarrow y \nonumber\\
            &:=& x \succ y  + x \prec y\\ 
            &:=&  x \vee y + x \wedge y\nonumber . 
\end{eqnarray} 
The nine axioms, stated by Marcelo Aguiar and Jean-Louis Loday in \cite{al} are:
\begin{eqnarray*}
 (x \nwarrow y) \nwarrow z = x \nwarrow (y \star z), &\  (x \nearrow y) \nwarrow z = x \nearrow (y \prec z), &\  (x  \wedge y) \nearrow z = x \nearrow (y \succ z),\\
 (x \swarrow y) \nwarrow z = x \swarrow (y \wedge z), &\  (x \searrow y) \nwarrow z = x \searrow (y \nwarrow z), &\  (x  \vee y) \nearrow z = x \searrow (y \nearrow z), \\
 (x \prec y) \swarrow z = x \swarrow (y \vee z), &\  (x \succ y) \swarrow z = x \searrow (y  \swarrow z), &\  (x  \star y) \searrow z = x \searrow (y \searrow  z).
\end{eqnarray*}
\end{definition} 

The operations $\searrow, \nearrow, \nwarrow, \swarrow $ are referred to as southeast, northeast, northwest, and
southwest, respectively. Accordingly, $\wedge, \vee, \prec \text{and} \succ$ are called north, south, west and east respectively.
The axioms are displayed in the form of a $3\times3$ matrix. As in \cite{al}, we will make use of standard
matrix terminology (entries, rows and columns) to refer to them.\\

Let $Q$ be a quadri-algebra. Following \cite[Paragraph 3.1]{folo}, we extend the four products to $Q \ootimes Q$ in the following way: if $a \in  Q$,
\begin{eqnarray*} 
 a \nwarrow \un = a,\hskip 6mm a \nearrow \un = 0, && \un \nwarrow a = 0,\hskip 6mm  \un \nearrow a = 0, \\
 a \swarrow \un = 0 ,\hskip 6mm a \searrow \un = 0, && \un \swarrow a = 0,\hskip 6mm \un \searrow a = a.
\end{eqnarray*} 
It follows that we have for any $a \in Q$:
\begin{eqnarray*} 
 a \wedge \un = a,\hskip 6mm \un \wedge a = 0, && \un \vee a = a,\hskip 6mm  a \vee \un = 0, \\
 a  \succ \un = 0,\hskip 6mm \un \succ a = a, && \un \prec a = 0,\hskip 6mm  a \prec \un = a.
\end{eqnarray*} 

\section{from quadri-algebras to dendriform algebras}
\noindent The three column sums in the matrix of quadri-algebra axioms yield:
\begin{eqnarray*}
 (x \prec y) \prec z = x \prec (y \star z),\;\;\; (x \succ y) \prec z = x \succ (y \prec z)\;\; \text{and}\; \; (x \star y) \succ z = x \succ (y \succ z).   
\end{eqnarray*}
Thus, endowed with the operations west for left and east for right, $Q$ is a dendriform algebra. We denote it by $Q_h$ and call it the horizontal dendriform algebra associated to $Q$.
Considering instead the three row sums  in the matrix of quadri algebra axioms yields:
\begin{eqnarray*}
 (x \wedge y) \wedge  z = x \wedge (y \star z),\;\;\; (x \vee y) \wedge  z = x \vee (y \wedge  z)\;\; \text{and}\; \; (x \star y) \vee z = x \vee (y \vee z).
\end{eqnarray*}
Thus, endowed with the operations north for left and south for right, $Q$ is a dendriform algebra. We denote it by $Q_v$ and call it the vertical dendriform algebra associated to $Q$. The associative operations corresponding to the dendriform algebras $Q_h$ and $Q_v$ by means of \eqref{star} coincide, according to \eqref{dra}. In other words,
\begin{equation}
\star = \nearrow + \searrow + \swarrow + \nwarrow = \prec + \succ = \wedge + \vee.
\end{equation}
\section{The shuffle quadri-algebra}
Let $k$ be a field, and let $V$ be a $k$-vector space. Let $\Cal H = T (V) = \bigoplus_{n \geq 0} V^{\otimes n}$ be the tensor algebra of $V$, where we denote by $\Delta$ the deconcatenation coproduct and by $m$ the concatenation product. Let $\Cal H^+ = T^+ (V) = \bigoplus_{n \geq 1} V^{\otimes n}$ be the augmentation ideal. For all $u, v \in \Cal H$, we have:
\begin{eqnarray} 
m(u \otimes v) &=& uv,
\end{eqnarray}
and
\begin{eqnarray}
\Delta(u) &=&  \sum_{u = u^1 u^2} u^1 \otimes u^2.
\end{eqnarray}
Here $u^1$ (resp. $u^2$) is the left (resp. right) part of the word $u$ defined by the place where $u$ is cut. This notation matches with Sweedler's notation for a coproduct in a coalgebra in general. The shuffle product $\shu$ is defined for any $u = u_1 u_2 \ldots u_p$ and $v = u_{p + 1} u_{p + 2} \ldots u_{p + q}$ with $u_1,\ldots, u_{p+Q}\in V$ by:
\begin{equation} 
u \shu v = \sum_{\sigma \in \smop{Sh} (p, q)} u_{\sigma^{-1} (1)}  u_{\sigma^{-1} (2)} \ldots \ldots u_{\sigma^{-1} (p + q)},
\end{equation}
where $\mop{Sh}(p, q)$ denotes the set of $\sigma \in S_{p+q}$ verifying $\sigma(1) <\cdots< \sigma(p)$ and $\sigma(p+1) <\cdots< \sigma(p+q)$. The triple $(\Cal H , \shu, \Delta)$ becomes a commutative Hopf algebra called the shuffle Hopf algebra. The shuffle algebra of a vector space $V$ provides an example of a commutative quadri-algebra (see Remark \ref{comm} below). Let us recall the two recursive formulas for the shuffle product:
\begin{eqnarray*}
au\shu bv&=&a(u\shu bv)+b(au\shu v),\\
ua\shu vb&=&(u\shu vb)a+(ua\shu v)b
\end{eqnarray*}
for any $a,b\in V$ and $u,v\in T(V)$.\\

The quadri-algebra laws on $\Cal H^+$ are defined by Marcelo Aguiar and Jean-Louis Loday in \cite{al} recursively on the degrees of $u$ and $v$, and can be extended to $\Cal H^+\ootimes\Cal H^+$ as explained above. We recall here the construction.\begin{enumerate}
\item If $u = \un$ and $v \in \Cal H^+$, we have:
\begin{eqnarray*} 
\un \shu v &=& v
\end{eqnarray*}
and:
\begin{eqnarray*}
\un \nearrow v = 0,\hskip 5mm  \un \searrow v = v, &&\un \swarrow v = 0, \hskip 5mm\un \nwarrow v = 0,\\
v\swarrow \un=0,\hskip 5mm  v\nwarrow\un = v, && v\nearrow \un=0, \hskip 5mm v\searrow \un=0,
\end{eqnarray*}
\noindent which immediately gives:
\begin{eqnarray*}
\un \succ v =v\prec \un = v, && \un \prec v =v\succ \un= 0, \\
\un  \wedge v =v\vee\un = 0, && \un \vee v =v\wedge\un =  v.
\end{eqnarray*} 
\item If $u, v  \in V$, we have: 
\begin{eqnarray*} 
u \shu v &=& uv + vu
\end{eqnarray*}
\noindent and:
\begin{eqnarray*}
 u \nearrow v = vu, && u \searrow v = 0, \\
 u \swarrow v = uv, && u \nwarrow v = 0,
\end{eqnarray*}

\noindent which immediately gives:
\begin{eqnarray*}
 u \succ v = vu, &&  u \prec v = uv, \\
u  \wedge v = vu, && u \vee v =  uv.
\end{eqnarray*} 
\item If $u\in V$, and $v = c \theta d$ with $c,d\in V$ and $\theta\in V^{\otimes (n-2)}$, we have:
\begin{eqnarray*} 
u \shu v &=& u \shu c \theta d\\
&=& u c \theta d + c (u \shu \theta) d + c \theta d u  + 0.
\end{eqnarray*}

\noindent The four quadri-algebra laws on $\Cal H$ are given by:
\begin{eqnarray}
 u \nearrow v = c \theta d u, && u \searrow v = c (u \shu \theta) d, \nonumber\\
 u \swarrow v = u c \theta d, && u \nwarrow v = 0 \label{nul},
\end{eqnarray}

\noindent which immediately gives:
\begin{eqnarray*} 
u \prec v = u c \theta d, && u \succ v =  c (u \shu \theta) d + c \theta d u,\\
u  \wedge v =  c \theta d u, && u \vee v =   c (u \shu \theta) d + u c \theta d.
\end{eqnarray*} 
\item If $u, v \in\Cal H$, such that $u, v$ are pure tensors of of degree $\geq 2$, i.e,  $u = awb$ and $v = c \theta d$ with $a,b,c,d\in V$, we have:
\begin{eqnarray*} 
u \shu v  &=& a(wb \shu c\theta)d + c (awb \shu \theta)d + a(w \shu c \theta d) b + c(aw \shu \theta d) b.
\end{eqnarray*}

\noindent The four quadri-algebra operations on $\Cal H$ are defined by:
\begin{eqnarray*}
 u \nearrow v = c(aw \shu \theta d) b, && u \searrow v = c (awb \shu \theta)d, \\
 u \swarrow v = a(wb \shu c\theta)d, && u \nwarrow v = a(w \shu c \theta d) b.
\end{eqnarray*}

\noindent The dendriform algebra operations on $\Cal H$ are defined by:
\begin{eqnarray*}
u \succ v = c (awb \shu \theta d), && u \prec v = a(wb \shu c\theta d), \\
u  \wedge v =   (aw \shu c \theta d) b, && u \vee v =  (awb \shu c \theta)d. 
\end{eqnarray*} 
\end{enumerate}  

\noindent We verify easily then:
\begin{eqnarray}\label{shu}
u \shu v &=& u \nearrow v + u \searrow v + u \nwarrow v + u \swarrow v \nonumber\\
            &=& u \succ v  + u \prec v\\ 
            &=& u \vee v + u \wedge v\nonumber. 
\end{eqnarray} 

\noindent The nine axioms of quadri-algebra laws can now be easily verified. 
\begin{remark}\label{comm}\rm
By commutativity of the shuffle product, the quadri-algebra laws verify for any $u,v$ of length $\ge 2$:
 \begin{eqnarray*}
 u \nearrow v &=& c(aw \shu \theta d) b  \\
& = & c(\theta d \shu a w) b \\
& = & v \swarrow u.
\end{eqnarray*}
\begin{eqnarray*}
 u \searrow v &=& c (awb \shu \theta) d \\
&=& c (\theta \shu awb)d \\
&=& v \nwarrow u.
\end{eqnarray*}
The verification of these two commutativity statements for any $u,v$ such that $u\otimes v\in\Cal H^+\ootimes\Cal  H^+$ is straightforward and left to the reader. The shuffle quadri-algebra $\Cal H^+$ is said to be \textsl{commutative}.
\end{remark}
\begin{remark}\cite{folo} \rm
The four quadri-algebra operations also admit a non-recursive definition in terms of shuffles: supposing that $u$ (resp. $v$) is a word of length $p$ (resp. $q$),
\begin{eqnarray}
u \searrow v &=& \sum_{\sigma \in \smop{Sh} (p, q),\, \sigma^{-1}(1)= p+1 \smop{ and }\sigma^{-1}(p+q)= p+q} u_{\sigma^{-1} (1)}  u_{\sigma^{-1} (2)} \ldots \ldots u_{\sigma^{-1} (p + q)},\\
u \nearrow v &=& \sum_{\sigma \in \smop{Sh} (p, q),\, \sigma^{-1}(1)= p+1 \smop{ and }\sigma^{-1}(p+q)= p} u_{\sigma^{-1} (1)}  u_{\sigma^{-1} (2)} \ldots \ldots u_{\sigma^{-1} (p + q)},\\
u \nwarrow v &=& \sum_{\sigma \in \smop{Sh} (p, q),\, \sigma^{-1}(1)= 1 \smop{ and }\sigma^{-1}(p+q)= p} u_{\sigma^{-1} (1)}  u_{\sigma^{-1} (2)} \ldots \ldots u_{\sigma^{-1} (p + q)},\\
u \swarrow v &=& \sum_{\sigma \in \smop{Sh} (p, q),\, \sigma^{-1}(1)= 1 \smop{ and }\sigma^{-1}(p+q)= p+q} u_{\sigma^{-1} (1)}  u_{\sigma^{-1} (2)} \ldots \ldots u_{\sigma^{-1} (p + q)}.
\end{eqnarray}
Informally,
\begin{itemize}
\item $u\searrow v$ is the sum of words obtained by shuffling $u$ and $v$ in such a manner that the first letter is the first letter of $v$ and the last letter is the last letter of $v$,
\item $u\nearrow v$ is the sum of words obtained by shuffling $u$ and $v$ in such a manner that the first letter is the first letter of $v$ and the last letter is the last letter of $u$,
\item $u\nwarrow v$ is the sum of words obtained by shuffling $u$ and $v$ in such a manner that the first letter is the first letter of $u$ and the last letter is the last letter of $u$,
\item $u\swarrow v$ is the sum of words obtained by shuffling $u$ and $v$ in such a manner that the first letter is the first letter of $u$ and the last letter is the last letter of $v$.
\end{itemize}
Similar expressions for $\prec,\succ,\wedge,\vee$ and $\shu$ are straightforward and left to the reader.
\end{remark}
\noindent We can now state the main result of this article.
\begin{theorem}\label{thm:main}
Let $m$ be the concatenation product of words. The eight following diagrams commute.
\diagramme{
\xymatrix{\Cal H\otimes\Cal H^+\otimes\Cal H^+\ar[rr]^{I\otimes m} \ar[d]_{\tau_{23}\circ\Delta\otimes I\otimes I}&&
\Cal H\otimes\Cal H^+\ar[dd]^{\nearrow}\\
\Cal H\otimes\Cal H^+\otimes\Cal H\otimes\Cal H^+\ar[d]_{\searrow\otimes\wedge}&&\\
\Cal H^+\otimes\Cal H^+\ar[rr]_m &&\Cal H^+
}
\hskip 16mm
\xymatrix{\Cal H\otimes\Cal H^+\otimes\Cal H^+\ar[rr]^{I\otimes m} \ar[d]_{\tau_{23}\circ\Delta\otimes I\otimes I}&&
\Cal H\otimes\Cal H^+\ar[dd]^{\nearrow}\\
\Cal H\otimes\Cal H^+\otimes\Cal H\otimes\Cal H^+\ar[d]_{\succ\otimes\nearrow}&&\\
\Cal H^+\otimes\Cal H^+\ar[rr]_m &&\Cal H^+
}
}
\diagramme{
\xymatrix{\Cal H\otimes\Cal H^+\otimes\Cal H^+\ar[rr]^{I\otimes m} \ar[d]_{\tau_{23}\circ\Delta\otimes I\otimes I}&&
\Cal H\otimes\Cal H^+\ar[dd]^{\searrow}\\
\Cal H\otimes\Cal H^+\otimes\Cal H\otimes\Cal H^+\ar[d]_{\searrow\otimes\vee}&&\\
\Cal H^+\otimes\Cal H^+\ar[rr]_m &&\Cal H^+
}
\hskip 16mm
\xymatrix{\Cal H\otimes\Cal H^+\otimes\Cal H^+\ar[rr]^{I\otimes m} \ar[d]_{\tau_{23}\circ\Delta\otimes I\otimes I}&&
\Cal H\otimes\Cal H^+\ar[dd]^{\searrow}\\
\Cal H\otimes\Cal H^+\otimes\Cal H\otimes\Cal H^+\ar[d]_{\succ\otimes\searrow}&&\\
\Cal H^+\otimes\Cal H^+\ar[rr]_m &&\Cal H^+
}
}
\diagramme{
\xymatrix{\Cal H\otimes\Cal H^+\otimes\Cal H^+\ar[rr]^{I\otimes m} \ar[d]_{\tau_{23}\circ\Delta\otimes I\otimes I}&&
\Cal H\otimes\Cal H^+\ar[dd]^{\swarrow}\\
\Cal H\otimes\Cal H^+\otimes\Cal H\otimes\Cal H^+\ar[d]_{\swarrow\otimes\vee}&&\\
\Cal H^+\otimes\Cal H^+\ar[rr]_m &&\Cal H^+
}
\hskip 16mm
\xymatrix{\Cal H\otimes\Cal H^+\otimes\Cal H^+\ar[rr]^{I\otimes m} \ar[d]_{\tau_{23}\circ\Delta\otimes I\otimes I}&&
\Cal H\otimes\Cal H^+\ar[dd]^{\swarrow}\\
\Cal H\otimes\Cal H^+\otimes\Cal H\otimes\Cal H^+\ar[d]_{\prec\otimes\searrow}&&\\
\Cal H^+\otimes\Cal H^+\ar[rr]_m &&\Cal H^+
}
}
\diagramme{
\xymatrix{\Cal H\otimes\Cal H^+\otimes\Cal H^+\ar[rr]^{I\otimes m} \ar[d]_{\tau_{23}\circ\Delta\otimes I\otimes I}&&
\Cal H\otimes\Cal H^+\ar[dd]^{\nwarrow}\\
\Cal H\otimes\Cal H^+\otimes\Cal H\otimes\Cal H^+\ar[d]_{\swarrow\otimes\wedge}&&\\
\Cal H^+\otimes\Cal H^+\ar[rr]_m &&\Cal H^+
}
\hskip 16mm
\xymatrix{\Cal H\otimes\Cal H^+\otimes\Cal H^+\ar[rr]^{I\otimes m} \ar[d]_{\tau_{23}\circ\Delta\otimes I\otimes I}&&
\Cal H\otimes\Cal H^+\ar[dd]^{\nwarrow}\\
\Cal H\otimes\Cal H^+\otimes\Cal H\otimes\Cal H^+\ar[d]_{\prec\otimes\nearrow}&&\\
\Cal H^+\otimes\Cal H^+\ar[rr]_m &&\Cal H^+
}
}
In other words, for any $u\in\Cal H$ and $v, w \in \Cal H^+$ we have:
\begin{enumerate}
\item \begin{eqnarray*}
 u \nearrow (v w) &=& \sum_{u = u^1 u^2} (u^1 \searrow v) ( u^2 \wedge w) \\
&=& \sum_{u = u^1 u^2} ( u^1 \succ v)  ( u^2 \nearrow w), 
\end{eqnarray*}
\item \begin{eqnarray*}
 u \searrow (v w) &=& \sum_{u = u^1 u^2} (u^1 \searrow v) ( u^2 \vee w) \\
&=& \sum_{u = u^1 u^2} ( u^1 \succ v)  ( u^2 \searrow w), 
\end{eqnarray*}
\item \begin{eqnarray*}
 u \swarrow (v w) &=& \sum_{u = u^1 u^2} (u^1 \swarrow v) ( u^2 \vee w) \\
&=& \sum_{u = u^1 u^2} ( u^1 \prec v)  ( u^2 \searrow w), 
\end{eqnarray*}
\item \begin{eqnarray*}
 u \nwarrow (v w) &=& \sum_{u = u^1 u^2} (u^1 \swarrow v) ( u^2 \wedge w) \\
&=& \sum_{u = u^1 u^2} ( u^1 \prec v)  ( u^2 \nearrow w). 
\end{eqnarray*}
\end{enumerate}
\end{theorem}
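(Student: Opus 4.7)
My plan is to prove all eight identities by a single uniform combinatorial argument, using the non-recursive description of the four quadri-algebra operations recalled in the remark immediately preceding the theorem. The starting point is the classical decomposition
$$
u \shu (vw)\;=\;\sum_{u = u^1 u^2} (u^1 \shu v)(u^2 \shu w),
$$
which reflects the following bijection: in any shuffle $\sigma$ of $u$ with $vw$ the letters of $v$ all appear before the letters of $w$, so the letters of $u$ placed by $\sigma$ at positions less than or equal to the position of the last letter of $v$ form a prefix $u^1$ of $u$, the remaining letters form the complementary suffix $u^2$, and $\sigma$ then factors as (a shuffle of $u^1$ with $v$) followed by (a shuffle of $u^2$ with $w$). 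The very same bijection can equivalently be realized by cutting $\sigma$ at the position immediately before the first letter of $w$; the two cuts produce different decompositions $u = u^1 u^2$, differing in which letters of $u$ sitting strictly between the last letter of $v$ and the first letter of $w$ are assigned to $u^1$. This dichotomy is exactly what produces the two equivalent formulas in each of the four identities.

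Cut A (immediately after the last letter of $v$) forces the first piece to be a shuffle of $u^1$ with $v$ ending with the last letter of $v$, and leaves the second piece as an otherwise unrestricted shuffle of $u^2$ with $w$. Cut B (immediately before the first letter of $w$) leaves the first piece as an unrestricted shuffle of $u^1$ with $v$, and forces the second piece to be a shuffle of $u^2$ with $w$ starting with the first letter of $w$. For each of the four operations $\nearrow, \searrow, \swarrow, \nwarrow$, the left-hand side is extracted from $u \shu (vw)$ by prescribing the first letter and the last letter of the full shuffled word. I then simply propagate these two additional constraints through each cut. For example, for $u \nearrow (vw)$ (first letter equal to the first letter of $v$, last letter equal to the last letter of $u$), Cut A forces the first piece to start with the first letter of $v$ and to end with the last letter of $v$, so it equals $u^1 \searrow v$; and it forces the second piece to end with the last letter of $u^2$, so it equals $u^2 \wedge w = u^2 \nearrow w + u^2 \nwarrow w$. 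Cut B instead forces the first piece to start with the first letter of $v$, giving $u^1 \succ v$, while the second piece must end with the last letter of $u^2$, giving $u^2 \nearrow w$. The remaining three identities are handled by exactly the same case analysis, and yield precisely the pairs that appear in the eight diagrams.

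The only real obstacle is organizational: one must check the boundary cases in which $u^1 = \un$, $u^2 = \un$, or $u = \un$, making sure that the extension conventions recalled at the end of Section~4 (such as $\un \nearrow v = 0$, $\un \searrow v = v$, $v \wedge \un = v$, $v \nearrow \un = 0$) make every spurious summand vanish and every genuine summand match the correct contribution to the LHS. For instance, in the second formulation of identity $(2)$ the $u^1 = \un$ term is $(\un \succ v)(u \searrow w) = v\,(u \searrow w)$, which exactly accounts for those shuffles contributing to $u \searrow (vw)$ in which the whole of $u$ is placed after the first letter of $w$; and all terms with $u^2 = \un$ vanish because the LHS last-letter constraint can only be met when $u^2$ is non-empty. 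Once these boundary cases are verified the eight commutative diagrams all follow at once. An alternative proof by induction on $|u| + |v| + |w|$ using the recursive definitions given in Section~6 is also possible, but is significantly less transparent than the combinatorial bijection just described.
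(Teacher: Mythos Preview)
Your combinatorial approach is correct and is precisely the ``non-recursive proof'' that the paper sketches in the Remark immediately following its own proof. The paper's actual proof is quite different: it proceeds by induction on the length of $u$, checking the base cases $u=\un$ and $u\in V$ explicitly, and then for $u=a\theta b$ unwinds the recursive definitions of the four operations and appeals to the induction hypothesis, treating each of the eight identities separately with case distinctions on whether $v$ (or $w$) is a single letter. Your bijective argument is considerably more transparent and handles all eight identities uniformly; the price is that one must trust the non-recursive description of $\nearrow,\searrow,\swarrow,\nwarrow$ (which the paper cites from Foissy without proof). The inductive proof, while lengthier, uses only the recursive definitions and is therefore self-contained.

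One small slip in your boundary discussion: in the second formulation of identity~(2), the term with $u^2=\un$ is $(u\succ v)(\un\searrow w)=(u\succ v)\,w$, which does \emph{not} vanish (the convention is $\un\searrow w=w$). This term genuinely contributes to $u\searrow(vw)$: it collects those shuffles in which every letter of $u$ lies to the left of the first letter of $w$. Your claim that ``the LHS last-letter constraint can only be met when $u^2$ is non-empty'' applies to identities~(1) and~(4) (where the last letter must come from $u$), not to~(2) and~(3) (where the last letter must come from $w$). This is only an error in the illustrative example, not in the argument itself; once the boundary terms are matched correctly for each of the four cases, your proof goes through.
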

\begin{proof}
We will prove this theorem by induction on the length of $u$. Let us verify that the theorem is true for $u = \un $ and for $u\in V$.\\

\noindent For $u = \un$ and for $v, w \in \Cal H^+$ we have:
\begin{eqnarray*}
 u \nearrow (v w) = \un \nearrow (v w) &=& 0,
\end{eqnarray*}
and: 
\begin{eqnarray*}
 \sum_{u = u^1 u^2} (u^1 \searrow v) ( u^2 \wedge w) &=& (\un \searrow v) \underbrace{(\un \wedge w)}_{0}\\
&=& 0\\
&=& \un \nearrow (v w),
\end{eqnarray*}
\begin{eqnarray*}
\sum_{u = u^1 u^2} ( u^1 \succ v)  ( u^2 \nearrow w) &=& (\un \succ v)  \underbrace{(\un \nearrow w)}_{0}\\
&=& 0\\
&=& \un \nearrow (v w).
\end{eqnarray*}
Similarly:
\begin{eqnarray*}
 u \searrow (v w) = \un \searrow vw &=& vw.
\end{eqnarray*}
\begin{eqnarray*}
\sum_{u = u^1 u^2} (u^1 \searrow v) ( u^2 \vee w) &=& (\un \searrow v) (\un \vee w)\\ 
&=& vw \\
&=& \un \searrow (v w).
\end{eqnarray*}
\begin{eqnarray*}
\sum_{u = u^1 u^2} ( u^1 \succ v)  ( u^2 \searrow w) &=& (\un \succ v) (\un\searrow w) \\
&=& vw \\
&=& \un \searrow (v w),
\end{eqnarray*}
and by a similar computation, we prove that the two other assertions are true for $u = \un$.\\ 

\noindent For $u \in V$ and for any $v, w \in \Cal H^+$, we have:
\begin{eqnarray*}
 u \nearrow (v w) &=& v w u,
\end{eqnarray*}
and: 
\begin{eqnarray*}
 \sum_{u = u^1 u^2} (u^1 \searrow v) ( u^2 \wedge w) &=& (\un \searrow v) ( u \wedge w) + (u \searrow v) \underbrace{(\un \wedge w)}_{0}\\
&=& vwu\\
&=& u \nearrow (v w),
\end{eqnarray*}
\begin{eqnarray*}
\sum_{u = u^1 u^2} ( u^1 \succ v)  ( u^2 \nearrow w) &=& (\un \succ v)  ( u \nearrow w) + (u \succ v)  \underbrace{(\un \nearrow w)}_{0}\\ 
&=& v w u\\
&=& u \nearrow (v w).
\end{eqnarray*}
Similarly:
\begin{eqnarray*}
 u \searrow (v w) &=& vuw.
\end{eqnarray*}
\begin{eqnarray*}
\sum_{u = u^1 u^2} (u^1 \searrow v) ( u^2 \vee w) &=& (\un \searrow v) ( u \vee w) + \underbrace{ (u \searrow v)}_{0} (\un \vee w)\\
&=& vuw \\
&=& u \searrow (v w).
\end{eqnarray*}
\begin{eqnarray*}
\sum_{u = u^1 u^2} ( u^1 \succ v)  ( u^2 \searrow w) &=& (\un \succ v) \underbrace{( u \searrow w)}_{0} + ( u \succ v)  (\un\searrow w) \\
&=& vuw \\
&=& u \searrow (v w).
\end{eqnarray*}

\noindent By a similar computation, we prove that the two other assertions are true for $u\in V$. \\

\noindent We will now use the induction hypothesis to prove the theorem. Let $u = a\theta b$, $v$ and $w$ be three elements of $\Cal H^+$.\\

\noindent \textbf{Proof of (1):}
\begin{eqnarray*}
\sum_{u = u^1 u^2} (u^1 \searrow v) ( u^2 \wedge w)  &=& \underbrace{(u \searrow v)(\un \wedge w)}_{0} + \sum_{{u = u^1 u^2}\atop u^2 \neq \mop{\tiny{\un}}} ( u^1 \searrow v)  (u^2 \wedge w).
\end{eqnarray*}
The condition $u^2 \neq \un$ gives: $u^2 = u^{12} b$ where $u^{1} u^{12} = a \theta$, hence:   
\begin{eqnarray*}
\sum_{u = u^1 u^2} ( u^1 \searrow v)  ( u^2 \wedge w) &=& \sum_{a \theta = u^{1} u^{12}} (u^{1} \searrow  v)  (u^{12} b \wedge  w).
\end{eqnarray*}

We distinguish here two cases, the first case where $v$ is a single-letter word and the second case where $v$ is a word of length $\geq 2$, i.e $v = c \xi d$, where $c, d \in V$ and $\xi \in V^{\otimes n}$.\\

If $v \in  V$, by Remark \ref{comm} we have $u^{1} \searrow  v = v \nwarrow u^1 = 0$ for all $u^{1} \neq \un$ (see equation \eqref{nul}). Hence the sum $\sum_{a \theta = u^{1} u^{12}} (u^{1} \searrow  v)  (u^{12} b \wedge  w)$ gives one term where $u^{1} = \un$, the other terms all vanish, and thus:

\begin{eqnarray*}
\sum_{u = u^1 u^2} ( u^1 \searrow v)  ( u^2 \wedge w) &=&\sum_{a \theta = u^{1} u^{12}} (u^{1} \searrow  v)  (u^{12} b \wedge  w)\\
&=& (\un \searrow v)  ( u \wedge w)\\
&=& v( u \wedge w)\\
&=& u \nearrow ( v w).
\end{eqnarray*}
\noindent The last equality is valid because $v$ is a single letter here.\\

\noindent Now if $v = c \xi d$ we obtain the same result:
\begin{eqnarray*}
\sum_{u = u^1 u^2} ( u^1 \searrow v)  ( u^2 \wedge w) &=&  \sum_{a \theta = u^{1} u^{12}} (u^{1} \searrow  v)  (u^{12} b \wedge  w)\\
&=&  \sum_{a \theta = u^{1} u^{12}}  (u^{1} \succ c \xi) d (u^{12} \shu w) b\\
&=&  \sum_{a \theta = u^{1} u^{12}}  (u^{1} \succ c \xi) (u^{12} \succ dw) b\\
&=& \sum_{a \theta = u^{1} u^{12}} \big[ (u^{1} \succ c \xi) (u^{12} \nearrow d w) b + (u^{1} \succ c \xi) (u^{12} \searrow d w) b \big]\\
&=& (a \theta\nearrow v w) b + (a \theta \searrow v w) b \;\;\;\;\text{(induction hypothesis)}\\
&=& (a \theta \succ v w) b\\
&=& (a \theta b) \nearrow ( c\xi d w)\\
&=& u \nearrow ( v w).
\end{eqnarray*}

\noindent Similary, we have:
\begin{eqnarray*}
\sum_{u = u^1 u^2} ( u^1 \succ v)  ( u^2 \nearrow w)  &=& \underbrace{(u \succ v)(\un \nearrow w)}_{0} + \sum_{{u = u^1 u^2}\atop {u^1 \neq \; u , u^2 \neq \mop{\tiny{\un}}}} ( u^1 \succ v)  ( u^2 \nearrow w).
\end{eqnarray*}
The condition $u^1 \neq \; u$ and $u^2 \neq \un$ gives: $u^2 = u^{12} b$ where $u^{1}u^{12} = a \theta$, hence:   
\begin{eqnarray*}
\sum_{u = u^1 u^2} ( u^1 \succ v)  ( u^2 \nearrow w) &=& \sum_{a \theta = u^{1} u^{12}} (u^{1} \succ v)  (u^{12} b \nearrow w)\\ 
&=& \sum_{a \theta = u^{1} u^{12}} (u^{1} \succ v)  (u^{12} \succ w) b\\
&=& \sum_{a \theta = u^{1} u^{12}} (u^{1} \succ v)  (u^{12} \nearrow w) b + \sum_{a \theta = u^{1} u^{12}} (u^{1} \succ v)  (u^{12} \searrow w) b \\
&=& (a \theta \swarrow v w) b + (a \theta \nwarrow v w) b  \;\;\;\;\text{(induction hypothesis)}\\
&=& (a \theta \succ v w) b\\
&=& c (a \theta \shu \xi d w) b\\
&=& (a \theta b) \nearrow ( c\xi d w)\\
&=& u \nearrow ( v w).
\end{eqnarray*}
\noindent \textbf{Proof of (2):} By a similar method we prove the second assertion. We distinguish here two cases, the first case where $v$ is a single-letter word and the second case where $v$ is a word of length $\geq 2$, i.e $v = c \xi d$, where $c, d \in V$ and $\xi \in V^{\otimes n}$.\\
 
\noindent If $v \in  V$, by Remark \ref{comm} we have $u^{1} \searrow  v = v \nwarrow u^1 = 0$ for all $u^{1} \neq \un$ (see equation \eqref{nul}). Hence the sum $\sum_{u = u^1 u^2} ( u^1 \searrow v)  ( u^2 \vee w)$ gives one term where $u^1 = \un$ and $u^2 = u$, the other terms all vanish, we have:

\begin{eqnarray*}
\sum_{u = u^1 u^2} ( u^1 \searrow v)  ( u^2 \vee w) &=& (\un \searrow v) (u  \vee w)\\  
&=& v (u  \vee w)\\  
&=& u \searrow (v w),
\end{eqnarray*}
\noindent and if $v = c \xi d$, we have:
\begin{eqnarray*}
\sum_{u = u^1 u^2} ( u^1 \searrow v)  ( u^2 \vee w) &=& \sum_{u = u^{1} u^{2}} (u^{1} \searrow c \xi d)  (u^{2}  \vee w)\\  
&=& \sum_{u = u^{1} u^{2}} \sum_{u^1 = u^{11} u^{12}}(u^{11} \searrow c ) (u^{12}  \vee \xi d) (u^{2}  \vee w)\;\;\text{(induction hypothesis)}\\
&=& \sum_{u = u^{11} u^{12} u^{2}} (u^{11} \searrow c ) (u^{12}  \vee \xi d) (u^{2}  \vee w)\\
&=& \sum_{u = u^{11} u'} (u^{11} \searrow c ) \sum_{u' = u^{11} u^{12}}(u^{12}  \vee \xi d) (u^{2}  \vee w)\\
&=& \sum_{u = u^{11} u'} (u^{11} \searrow c ) (u'  \vee \xi dw) \;\;\text{(induction hypothesis)}\\
&=& \sum_{u = u^{11} u'} (c \nwarrow u^{11}) (u'  \vee \xi d w).\\
\end{eqnarray*}
The last sum contains one term because $c \nwarrow u^{11} = 0$ if $u^{11} \neq \un$, then we have:
\begin{eqnarray*}
\sum_{u = u^1 u^2} ( u^1 \searrow v)  ( u^2 \vee w) &=& c (u  \vee \xi d w)\\
&=& u \searrow (c\xi d w)\\
&=& u \searrow (v w).
\end{eqnarray*}

\noindent Similarly, we distinguish here two cases, the first case where $w$ is a single-letter word and the second case where $w$ is a word of length $\geq 2$, i.e $w = e \eta f$, where $e, f \in V$ and $\eta \in V^{\otimes n}$.\\

\noindent If $w \in  V$, the sum $\sum_{u = u^1 u^2} ( u^1 \succ v) ( u^2 \searrow w)$ gives one term where $u^1 = u$ and $u^2 = \un$, the other terms vanish, which gives:
\begin{eqnarray*}
\sum_{u = u^1 u^2} ( u^1 \succ v)  ( u^2 \searrow w) &=& (u \succ v)  (\un  \searrow w)\\ 
&=& (u \succ v) w\\
&=& u \searrow (v w),
\end{eqnarray*}
and if  $w = e \eta f$, we have:
 \begin{eqnarray*}
\sum_{u = u^1 u^2} ( u^1 \succ v)  ( u^2 \searrow w) &=& \sum_{u = u^{1} u^{2}} (u^{1} \succ v)  (u^{2}  \searrow e \eta f)\\ 
&=& \sum_{u = u^{1} u^{2}} \sum_{u^2 = u^{21} u^{22}} (u^{1} \succ v)  (u^{21}  \succ e \eta) (u^{22} \searrow f)\;\;\text{(induction hypothesis)}\\
&=& \sum_{u = u^{1} u^{21} u^{22}} (u^{1} \succ v)  (u^{21}  \succ e \eta) (u^{22}\searrow f)\\
&=& \sum_{u = u' u^{22}} \sum_{u' = u^{1} u^{21}}(u^{1} \succ v)  (u^{21}  \succ e \eta) (u^{22} \searrow f)\\
&=& \sum_{u = u' u^{22}} (u' \succ v e \eta) (u^{22} \searrow f)\;\;\text{(induction hypothesis)}\\
&=& \sum_{u = u' u^{22}} (u' \succ v e \eta) (f\nwarrow u^{22}).
\end{eqnarray*}
\noindent The last sum contains one term because $f \nwarrow u^{22} = 0$ if $u^{22} \neq \un$, then we obtain:
\begin{eqnarray*}
\sum_{u = u^1 u^2} ( u^1 \succ v)  ( u^2 \searrow w) &=&  (u \succ v e \eta) f\\ 
&=&  (u \succ c \xi d e \eta) f\\
&=&  c (u \shu \xi d e \eta) f\\
&=& u \searrow (c\xi d e \eta f)\\
&=& u \searrow (v w).
\end{eqnarray*}
\noindent \textbf{Proof of (3):}
\begin{eqnarray*}
\sum_{u = u^1 u^2} ( u^1 \swarrow v)  ( u^2 \vee w)  &=& \underbrace{(\un \swarrow v)(u \vee w)}_{0} + \sum_{{u = u^1 u^2}\atop {u^1 \neq \mop{\tiny{\un}}, u^2 \neq u}} ( u^1 \swarrow v)  ( u^2 \vee w).
\end{eqnarray*}
\noindent The condition $u^1 \neq \un$ gives: $u^1 = a u^{11}$ where $u^{11}u^{2} = \theta b$, hence:   
\begin{eqnarray*}
\sum_{u = u^1 u^2}  ( u^1 \swarrow v)  ( u^2 \vee w)  &=& \sum_{u = a u^{11} u^2}  (a u^{11}  \swarrow v)  ( u^2 \vee w)\\ 
&=& \sum_{u = a u^{11} u^2}   a (u^{11} \vee v)  ( u^2 \vee w) \\
&=& \sum_{u = a u^{11} u^2}   a (u^{11} \swarrow  v) ( u^2 \vee w) +  \sum_{u = a u^{11} u^2}   a (u^{11} \searrow v )  ( u^2 \vee w) \\
&=& a (\theta b \swarrow  v w) + a (\theta b \searrow v w)   \;\;\;\;\text{(induction hypothesis)}\\
&=& a (\theta b \vee  v w) \\
&=& (a \theta b) \swarrow (vw)\\
&=& u \swarrow ( v w).
\end{eqnarray*}

\noindent Similarly, we have:
 \begin{eqnarray*}
\sum_{u = u^1 u^2} ( u^1 \prec v)  ( u^2 \searrow w)  &=& \underbrace{(\un \prec v)(u \searrow w)}_{0} + \sum_{{u = u^1 u^2}\atop {u^1 \neq \mop{\tiny{\un}}, u^2 \neq u}} ( u^1 \prec v)  ( u^2 \searrow w).
\end{eqnarray*}
\noindent The condition $u^1 \neq \un$ gives: $u^1 = a u^{11}$ where $u^{11}u^{2} = \theta b$, hence: 
\begin{eqnarray*}
\sum_{u = u^1 u^2}  ( u^1 \prec v)  ( u^2 \searrow w)  &=& \sum_{u = a u^{11} u^2}  (a u^{11} \prec  v)  (u^2 \searrow w).
\end{eqnarray*}
\noindent We distinguish here two cases, the first case where $w$ is a single-letter word and the second case where $w$ is a word of length $\geq 2$, i.e $w = e \eta f$, where $e, f \in V$ and $\eta \in V^{\otimes n}$.\\

\noindent If $w \in  V$, the sum $\sum_{u = a u^{11} u^2}  (a u^{11} \prec  v)  (u^2 \searrow w)$ gives one term where $u^2 = \un$, the other terms  all vanish, we have: 
\begin{eqnarray*}
\sum_{u = u^1 u^2}  (u^1 \prec v)  ( u^2 \searrow w) &=&\sum_{u = a u^{11} u^2}  (a u^{11} \prec  v)  (u^2 \searrow w)\\ &=&  (u \prec v)  (\un \searrow w)\\
&=&  (u \prec v) w \\  
&=& u \swarrow ( v w).
\end{eqnarray*}	
\noindent Now if $w = e \eta f$, we have:
\begin{eqnarray*}
\sum_{u = u^1 u^2}  ( u^1 \prec v)  ( u^2 \searrow w) &=& \sum_{u = a u^{11} u^2}  (a u^{11} \prec  v)  (u^2 \searrow w)\\ 
&=& \sum_{u = a u^{11} u^2}  (a u^{11} \prec  v)  (u^2 \searrow e \eta f)\\ 
&=& \sum_{u = a u^{11} u^2} a (u^{11} \shu  v)  e(u^2 \shu \eta) f\\ 
&=& \sum_{u = a u^{11} u^2} a (u^{11} \vee  v e) (u^2 \vee \eta f) \\ 
&=& \sum_{u = a u^{11} u^2} a (u^{11} \searrow  v e) (u^2 \vee \eta f) + \sum_{u = a u^{11} u^2} a (u^{11} \swarrow  v e) (u^2 \vee \eta f) \\ 
&=& a (\theta b \searrow  v e \eta f) + a (\theta b \swarrow  v e \eta f)  \;\;\;\;\text{(induction hypothesis)}\\
 &=& a (\theta b \vee  v w)  \\
&=& u \swarrow ( v w),
\end{eqnarray*}
which proves the third assertion.\\

\noindent \textbf{Proof of (4):}
\begin{eqnarray*}
\sum_{u = u^1 u^2} ( u^1  \swarrow v)  ( u^2 \wedge w)  &=& \underbrace{(\un \swarrow v)(u \wedge w) + (u \swarrow v)(\un \wedge w)}_{0} + \sum_{{u = u^1 u^2}\atop {u^1 , u^2 \neq \mop{\tiny{\un}}, u}} ( u^1 \swarrow v)  ( u^2 \wedge w).
\end{eqnarray*}
\noindent The condition $u^1 , u^2 \neq \un, u$ gives: $u^1 = a u^{11}$ and $u^2 = u^{12} b$ where $u^{11}u^{12} = \theta$, hence:   
\begin{eqnarray*}
\sum_{u = u^1 u^2} ( u^1 \swarrow v)  ( u^2 \wedge w) &=& \sum_{\theta = u^{11} u^{12}} (a u^{11} \swarrow v)  (u^{12} b \shu w) b\\ 
&=& \sum_{\theta = u^{11} u^{12}} (a u^{11} \swarrow v)  (u^{12}  \vee w) b + \sum_{\theta = u^{11} u^{12}} (a u^{11} \swarrow v)  (u^{12}  \wedge w) b \\
&=& (a \theta \swarrow  v w) b + (a \theta  \nwarrow  v w) b  \;\;\;\;\text{(induction hypothesis)}\\
&=& (a \theta \prec v w) b \\
&=& a (\theta \shu v w) b\\
&=& (a \theta b) \nwarrow (v w)\\
&=& u \nwarrow (v w).
\end{eqnarray*}
\noindent Similarly we have:
\begin{eqnarray*}
\sum_{u = u^1 u^2} ( u^1 \prec v)  ( u^2 \nearrow w) &=& \underbrace{(\un \prec v)(u \nearrow w) + (u \prec v)(\un \nearrow w)}_{0} + \sum_{{u = u^1 u^2}\atop {u^1 , u^2 \neq \mop{\tiny{\un}}, u}} ( u^1 \prec v)  ( u^2 \nearrow w).
\end{eqnarray*}
\noindent The condition $u^1 , u^2 \neq \un, u$ gives: $u^1 = a u^{11}$ and $u^2 = u^{12} b$ where $u^{11}u^{12} = \theta$, hence:
\begin{eqnarray*}
\sum_{u = u^1 u^2} ( u^1 \prec v)  ( u^2 \nearrow w) &=& \sum_{\theta = u^{11} u^{12}} (a u^{11} \prec v)  (u^{12} b \nearrow w)\\
&=& \sum_{\theta = u^{11} u^{12}} (a u^{11} \prec v)  (u^{12} \succ w) b\\
&=& \sum_{\theta = u^{11} u^{12}} (a u^{11} \prec v)  (u^{12} \nearrow w) b + \sum_{\theta = u^{11} u^{12}} (a u^{11} \prec v)  (u^{12} \searrow w) b\\
&=& (a \theta \swarrow v w ) b + (a \theta \nwarrow v w ) b  \;\;\;\;\text{(induction hypothesis)}\\
&=& (a \theta \prec v w ) b\\
&=& a (\theta \shu v w ) b\\
&=& (a \theta b) \nwarrow (v w ) \\
&=& u \nwarrow (v w ),
\end{eqnarray*}
which proves the fourth assertion.
\end{proof}
\begin{remark}\rm
A non-recursive proof of Theorem \ref{thm:main} is available. Indeed, to prove the first assertion of (2) we note that $u\searrow (vw)$ is obtained by summing all terms in the shuffle of $u$ with $vw$ so that the first letter belongs to $v$ and the last letter belongs to $w$. We cut each of these terms just after the last letter of $v$. The left part is obtained by shuffling a prefix of $u$ with $v$ such that the first and last letters are in $v$. The right part is obtained by shuffling a suffix of $u$ with $w$ such that the last letter is in $w$. We proceed similarly for the second assertion, cutting just before the first letter of $w$. Items (1), (3) and (4) can be handled similarly.
\end{remark}

\begin{corollary}\label{cor-one}
The four following diagrams commute:
\diagramme{
\xymatrix{\Cal H\otimes\Cal H^+\otimes\Cal H^+\ar[rr]^{I\otimes m} \ar[d]_{\tau_{23}\circ\Delta\otimes I\otimes I}&&
\Cal H\otimes\Cal H^+\ar[dd]^{\wedge}\\
\Cal H\otimes\Cal H^+\otimes\Cal H\otimes\Cal H^+\ar[d]_{\vee\otimes\wedge}&&\\
\Cal H^+\otimes\Cal H^+\ar[rr]_m &&\Cal H^+
}
\hskip 16mm
\xymatrix{\Cal H\otimes\Cal H^+\otimes\Cal H^+\ar[rr]^{I\otimes m} \ar[d]_{\tau_{23}\circ\Delta\otimes I\otimes I}&&
\Cal H\otimes\Cal H^+\ar[dd]^{\prec}\\
\Cal H\otimes\Cal H^+\otimes\Cal H\otimes\Cal H^+\ar[d]_{\prec\otimes\succ}&&\\
\Cal H^+\otimes\Cal H^+\ar[rr]_m &&\Cal H^+
}
}
\diagramme{
\xymatrix{\Cal H\otimes\Cal H^+\otimes\Cal H^+\ar[rr]^{I\otimes m} \ar[d]_{\tau_{23}\circ\Delta\otimes I\otimes I}&&
\Cal H\otimes\Cal H^+\ar[dd]^{\vee}\\
\Cal H\otimes\Cal H^+\otimes\Cal H\otimes\Cal H^+\ar[d]_{\vee\otimes\vee}&&\\
\Cal H^+\otimes\Cal H^+\ar[rr]_m &&\Cal H^+
}
\hskip 16mm
\xymatrix{\Cal H\otimes\Cal H^+\otimes\Cal H^+\ar[rr]^{I\otimes m} \ar[d]_{\tau_{23}\circ\Delta\otimes I\otimes I}&&
\Cal H\otimes\Cal H^+\ar[dd]^{\succ}\\
\Cal H\otimes\Cal H^+\otimes\Cal H\otimes\Cal H^+\ar[d]_{\succ\otimes\succ}&&\\
\Cal H^+\otimes\Cal H^+\ar[rr]_m &&\Cal H^+
}
}
In other words, given $u\in\Cal H$ and $v,w\in\Cal H^+$ we have:
\begin{enumerate}
\item \begin{eqnarray*}
 u  \wedge ( v w) &=& u \nearrow (v w) +  u \nwarrow (v w)\\
 &=& \sum_{u = u^1 u^2} (u^1 \vee v) ( u^2 \wedge w), 
\end{eqnarray*}
\item \begin{eqnarray*}
u  \prec ( v w)  &=& u \nwarrow (v w) + u \swarrow (v w)\\
 &=& \sum_{u = u^1 u^2} (u^1 \prec v) ( u^2 \succ w),
\end{eqnarray*}
\item \begin{eqnarray*}
u  \vee ( v w) &=& u \swarrow (v w) + u \searrow (v w)  \\
&=& \sum_{u = u^1 u^2} (u^1 \vee v) ( u^2 \vee w),
\end{eqnarray*}
\item \begin{eqnarray*}
 u  \succ ( v w)  &=& u \searrow (v w) +  u \nearrow (v w) \\
&=& \sum_{u = u^1 u^2} (u^1 \succ v) ( u^2 \succ w).
\end{eqnarray*}
\end{enumerate}
\end{corollary}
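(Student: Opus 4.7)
The plan is to derive Corollary \ref{cor-one} directly from Theorem \ref{thm:main} by summing pairs of the identities already established, exploiting the four decompositions
$$\wedge=\nearrow+\nwarrow,\qquad \prec=\nwarrow+\swarrow,\qquad \vee=\swarrow+\searrow,\qquad \succ=\searrow+\nearrow.$$
The first equality on each line of the corollary is nothing more than these four definitions applied to the element $u$ against the product $vw$; no calculation is required. The work is therefore concentrated in the second equality of each line, and in each case there is one ``compatible'' pair of identities from Theorem \ref{thm:main} to add.

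For item (1), I will add the first formula of Theorem \ref{thm:main}(1) to the first formula of Theorem \ref{thm:main}(4); both expressions sum over the same deconcatenation $u=u^1u^2$ with the common right factor $(u^2\wedge w)$, so the sum of the left factors becomes $(u^1\searrow v)+(u^1\swarrow v)=u^1\vee v$. For item (3), the same idea applies: adding the first formula of Theorem \ref{thm:main}(3) to the first formula of Theorem \ref{thm:main}(2) yields the common right factor $(u^2\vee w)$ and combines $\swarrow+\searrow=\vee$ on the left. Items (2) and (4) are handled symmetrically using the \emph{second} formula of each part of Theorem \ref{thm:main}: for (2) one adds the second formula of \ref{thm:main}(4) and the second formula of \ref{thm:main}(3), factoring out the common $(u^1\prec v)$ and combining $\nearrow+\searrow=\succ$ on the right; for (4) one adds the second formula of \ref{thm:main}(2) and the second formula of \ref{thm:main}(1), factoring out $(u^1\succ v)$ and combining $\searrow+\nearrow=\succ$ on the right.

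There is essentially no obstacle: once the theorem is in hand the corollary is a bookkeeping exercise. The only point worth a brief mention is the boundary cases where $u=\un$ or one of the splittings $u^1,u^2$ is $\un$; these are handled by the convention that extends the quadri-algebra laws to $\Cal H\ootimes\Cal H^+$, and since the identities of Theorem \ref{thm:main} already incorporate that convention, the sums produced by addition automatically respect it. The four commutative diagrams in the corollary statement are then the diagrammatic reformulation of the four identities, obtained by replacing each diagonal operation in the diagrams of Theorem \ref{thm:main} by the corresponding sum, and replacing the right-hand vertical arrow accordingly.
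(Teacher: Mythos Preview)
Your proof is correct and is essentially identical to the paper's own argument: the authors also obtain each of the four identities by adding the same pair of diagrams from Theorem~\ref{thm:main} that you indicate, using the decompositions $\wedge=\nearrow+\nwarrow$, $\prec=\nwarrow+\swarrow$, $\vee=\swarrow+\searrow$, $\succ=\searrow+\nearrow$ to combine the factors.
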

\begin{proof} 
The diagram in position $(1,1)$ of the $2\times 2$-matrix of diagrams above is obtained by adding both diagrams $(4,1)$ and $(1,1)$ in the $4\times 2$-matrix of diagrams of Theorem \ref{thm:main}. Similarly, diagram $(1,2)$ is obtained by adding both diagrams $(3,2)$ and $(4,2)$ of Theorem \ref{thm:main}, diagram $(2,1)$ is obtained by adding $(2,1)$ and $(3,1)$ of Theorem \ref{thm:main}, and finally $(2,2)$ is obtained as the sum of $(1,2)$ and $(2,2)$ thereof.
\end{proof}
\begin{corollary}\label{theo3} Both following diagrams commute:
\diagramme{
\xymatrix{\Cal H\otimes\Cal H^+\otimes\Cal H^+\ar[rr]^{I\otimes m} \ar[d]_{\tau_{23}\circ\Delta\otimes I\otimes I}&&
\Cal H\otimes\Cal H^+\ar[dd]^{\sshu}\\
\Cal H\otimes\Cal H^+\otimes\Cal H\otimes\Cal H^+\ar[d]_{\vee\otimes\sshu}&&\\
\Cal H^+\otimes\Cal H^+\ar[rr]_m &&\Cal H^+
}
\hskip 16mm
\xymatrix{\Cal H\otimes\Cal H^+\otimes\Cal H^+\ar[rr]^{I\otimes m} \ar[d]_{\tau_{23}\circ\Delta\otimes I\otimes I}&&
\Cal H\otimes\Cal H^+\ar[dd]^{\sshu}\\
\Cal H\otimes\Cal H^+\otimes\Cal H\otimes\Cal H^+\ar[d]_{\sshu\otimes\succ}&&\\
\Cal H^+\otimes\Cal H^+\ar[rr]_m &&\Cal H^+
}
}
In other words, for any $u\in\Cal H$ and $v, w \in \Cal H^+$, we have:
\begin{eqnarray}
 u \shu (v w) &=& \sum_{u = u^1 u^2} (u^1 \vee v) ( u^2 \shu w) \\
&=& \sum_{u = u^1 u^2} ( u^1 \shu v)  ( u^2 \succ w). 
\end{eqnarray}
\end{corollary}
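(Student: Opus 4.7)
The plan is to derive both identities directly from Corollary \ref{cor-one} by summing the appropriate pairs of commutative diagrams and invoking the decomposition \eqref{shu} of the shuffle product.

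For the first identity, I would start from the relation $\shu = \vee + \wedge$ applied to the pair $(u, vw)$, which gives
$$u \shu (vw) = u \vee (vw) + u \wedge (vw).$$
Using items (1) and (3) of Corollary \ref{cor-one}, the right-hand side equals
$$\sum_{u = u^1 u^2} (u^1 \vee v)(u^2 \vee w) + \sum_{u = u^1 u^2} (u^1 \vee v)(u^2 \wedge w) = \sum_{u = u^1 u^2} (u^1 \vee v)\bigl[(u^2 \vee w) + (u^2 \wedge w)\bigr],$$
and factoring out $(u^1 \vee v)$ and applying $\shu = \vee + \wedge$ once more to $(u^2, w)$ yields $\sum (u^1 \vee v)(u^2 \shu w)$, which is exactly the first claimed equality. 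Diagrammatically, this is just the sum of the $(1,1)$ and $(2,1)$ diagrams in the $2\times 2$ matrix of Corollary \ref{cor-one}.

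For the second identity, I would symmetrically use $\shu = \succ + \prec$, giving
$$u \shu (vw) = u \succ (vw) + u \prec (vw).$$
Items (2) and (4) of Corollary \ref{cor-one} rewrite this as
$$\sum_{u = u^1 u^2} (u^1 \succ v)(u^2 \succ w) + \sum_{u = u^1 u^2} (u^1 \prec v)(u^2 \succ w) = \sum_{u = u^1 u^2} \bigl[(u^1 \succ v) + (u^1 \prec v)\bigr](u^2 \succ w),$$
and factoring out $(u^2 \succ w)$ and applying $\shu = \succ + \prec$ to the pair $(u^1, v)$ yields $\sum (u^1 \shu v)(u^2 \succ w)$. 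This is the sum of the $(1,2)$ and $(2,2)$ diagrams of Corollary \ref{cor-one}.

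No genuine obstacle arises: the whole argument is a linear combination of the four diagrams of Corollary \ref{cor-one}, and the extensions of the operations to $\ootimes$ and the conventions for $\un$ fixed in Sections 2 and 4 guarantee that both sides make sense on the full domain $\Cal H \otimes \Cal H^+ \otimes \Cal H^+$. The only care needed is to check that the extension conventions are compatible with distributing $\shu$ over its two decompositions in the extremal cases $u = \un$ and $u^1 = \un$ or $u^2 = \un$, which is immediate from the tables of unit actions listed after the definition of the quadri-algebra.
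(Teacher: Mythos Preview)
Your proof is correct and follows exactly the same approach as the paper's own proof, which simply states that the first diagram is obtained by adding diagrams $(1,1)$ and $(2,1)$ of Corollary~\ref{cor-one} and the second by adding $(1,2)$ and $(2,2)$. You have merely spelled out in detail the factoring step that the paper leaves implicit.
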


\begin{proof}
The first diagram is obtained by adding diagrams $(1.1)$ and $(2.1)$ of Corollary \ref{cor-one}, the second is obtained by adding $(1.2)$ and $(2.2)$ thereof.
\end{proof}
\begin{ex}
An example of computation for $u = u_1 u_2 \in V^{\otimes 2}$,  $v = v_1 v_2 \in V^{\otimes 2}$ and $w \in V$.
\begin{eqnarray*}
 u \shu (v w) &=& (u_1 u_2) \shu (v_1 v_2 w) \\
 &=& u_1 u_2 v_1 v_2 w + u_1 v_1 u_2 v_2 w  + u_1 v_1 v_2 u_2 w + u_1 u_2 v_1 v_2 w  + u_1 v_1 v_2 w u_2 \\
&\hskip 10mm +& v_1 v_2 u_1 u_2 w + v_1 u_1 v_2 u_2 w + v_1 u_1 v_2 w u_2  + v_1 v_2 u_1 w u_2 + v_1 v_2 w u_1 u_2.
\end{eqnarray*}
Also we have:
\begin{eqnarray*}
 \sum_{u = u^1 u^2} (u^1 \vee v) ( u^2 \shu w) &=& (\un \vee v) (u \shu w) + (u_1 \vee v) (u_2 \shu w) + (u_1u_2 \vee v) (\un \shu w)  \\
&=&  v (u_1  u_2 w + u_1 w u_2 + w u_1 u_2) + (u_1 v_1 v_2 + v_1 u_1 v_2) (u_2  w + w u_2)\\
&\hskip 10mm +& (u_1 u_2 v_1 v_2 + u_1 v_1 u_2 v_2 + v_1 u_1 u_2 v_2 ) w  \\
&=&  v_1 v_2 u_1  u_2 w + v_1 v_2 u_1 w u_2 + v_1 v_2 w u_1 u_2 + u_1 v_1 v_2 u_2  w + v_1 u_1 v_2 u_2  w \\
&\hskip 10mm +& u_1 v_1 v_2 w u_2 + v_1 u_1 v_2 w u_2 + u_1 u_2 v_1 v_2 w + u_1 v_1 u_2 v_2 w + v_1 u_1 u_2 v_2 w, 
\end{eqnarray*}
and,
\begin{eqnarray*}
\sum_{u = u^1 u^2} ( u^1 \shu v)  ( u^2 \succ w)  &=& (\un \shu v) (u \succ w) + (u_1 \shu v) (u_2 \succ w) + (u_1u_2 \shu v) (\un \succ w)  \\
&=&  v_1 v_2 w u_1  u_2 + (u_1 v_1 v_2 + v_1 u_1 v_2 + v_1 v_2 u_1) w u_2 \\
&\hskip 10mm +& (u_1 u_2 v_1 v_2 + u_1 v_1 u_2 v_2 + v_1 u_1 u_2 v_2 + u_1 v_1 v_2 u_2 + v_1 u_1 v_2 u_2 + v_1 v_2 u_2 u_2) w  \\
&=& v_1 v_2 w u_1  u_2 + u_1 v_1 v_2  w u_2 + v_1 u_1 v_2 w u_2 + v_1 v_2 u_1 w u_2 + u_1 u_2 v_1 v_2 w \\
&\hskip 10mm +& u_1 v_1 u_2 v_2 w + v_1 u_1 u_2 v_2 w + u_1 v_1 v_2 u_2 w + v_1 u_1 v_2 u_2 w + v_1 v_2 u_2 u_2 w.
\end{eqnarray*}
Then we have:
\begin{eqnarray*}
 u \shu (v w) &=& \sum_{u = u^1 u^2} (u^1 \vee v) ( u^2 \shu w) \\
&=&\sum_{u = u^1 u^2} ( u^1 \shu v)  ( u^2 \succ w).
\end{eqnarray*}
\end{ex}

\section{Module-algebra structures on the shuffle quadri-algebra}
\noindent We consider the bialgebra $(\Cal H, \shu, \Delta)$ and the non-unitary infinitesimal bialgebra $(\Cal H^+, m, \Delta)$. The infinitesimal bialgebra compatibility relation is written as:
\begin{equation}
\Delta(uv) = (\Delta u)(\un\otimes v) - u\otimes v + (u\otimes\un)(\Delta v).
\end{equation}
Here we consider the restriction to $\Cal H^+$ of the \textsl{full} deconcatenation coproduct
$$\Delta:\Cal H^+\to\Cal H^+\ootimes\Cal H^+.$$ 
\begin{proposition} Both maps  $\vee$ and $\succ$ are left actions of $(\Cal H, \shu)$ on $\Cal H^+$, and both maps  $\wedge$ and $\prec$ are right actions of $(\Cal H, \shu)$ on $\Cal H^+$. In other words, the four following diagrams are commutative: 
\diagramme{
\xymatrix{\Cal H^+ \ootimes \Cal H^+ \ootimes \Cal H^+ \ar[rrr]^{\sshu \ootimes I}\ar[d]_{I \ootimes \vee}
&&&\Cal H^+ \ootimes \Cal H^+ \ar[d]^{\vee}\\
\Cal H^+ \ootimes \Cal H^+
\ar[rrr]_{\vee }&&&\Cal H^+
}
\hskip 12mm
\xymatrix{\Cal H^+ \ootimes \Cal H^+ \ootimes \Cal H^+ \ar[rrr]^{I\ootimes\sshu}\ar[d]_{\wedge \ootimes I}
&&&\Cal H^+ \ootimes \Cal H^+ \ar[d]^{\wedge}\\
\Cal H^+ \ootimes \Cal H^+
\ar[rrr]_{\wedge }&&&\Cal H^+
}
} 
\diagramme{
\xymatrix{\Cal H^+ \ootimes \Cal H^+ \ootimes \Cal H^+ \ar[rrr]^{I \ootimes \succ}\ar[d]_{\sshu \ootimes I}
&&&\Cal H^+ \ootimes \Cal H^+ \ar[d]^{\succ}\\
\Cal H^+ \ootimes \Cal H^+
\ar[rrr]_{\succ }&&& \Cal H^+
}
\hskip 12mm
\xymatrix{\Cal H^+ \ootimes \Cal H^+ \ootimes \Cal H^+ \ar[rrr]^{\prec\ootimes I}\ar[d]_{I\ootimes \sshu}
&&&\Cal H^+ \ootimes \Cal H^+ \ar[d]^{\prec}\\
\Cal H^+ \ootimes \Cal H^+
\ar[rrr]_{\prec }&&& \Cal H^+
}
} 
\noindent That is to say: 
\begin{equation}
\vee \circ (I \ootimes \vee) = \vee \circ (\shu \ootimes I),\hskip 12mm \wedge \circ (\wedge \ootimes I) = \wedge \circ (I\ootimes\shu),\label{th4}
\end{equation}
\begin{equation}
\succ \circ (I \ootimes \succ) = \;\succ \circ (\shu \ootimes I), \hskip 12mm \prec \circ (\prec\ootimes I) = \prec \circ (I\ootimes\shu).\label{th5}
\end{equation}
\end{proposition}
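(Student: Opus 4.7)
My plan is to read off each of the four identities as a single dendriform axiom applied to one of the two dendriform algebras derived from the shuffle quadri-algebra, exploiting the key observation that the abstract associative product $\star$ coincides with the shuffle $\shu$ in this context.

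First I would invoke \eqref{shu} to record the identity $\star = \prec + \succ = \wedge + \vee = \shu$ on $\Cal H^+$. This identifies the abstract associative product appearing in the dendriform axioms with the shuffle product. With this substitution in hand, I would apply the dendriform axioms \eqref{axioms-dend-bis} to the horizontal dendriform algebra $Q_h = (\Cal H^+, \prec, \succ)$ and to the vertical dendriform algebra $Q_v = (\Cal H^+, \wedge, \vee)$. The third axiom of $Q_h$ reads $(x \star y) \succ z = x \succ (y \succ z)$ and, after the substitution $\star = \shu$, becomes $(u \shu v) \succ w = u \succ (v \succ w)$, which is the left-action property of $\succ$. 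The first axiom of $Q_h$ reads $(x \prec y) \prec z = x \prec (y \star z)$ and becomes $(u \prec v) \prec w = u \prec (v \shu w)$, the right-action property of $\prec$. The corresponding third and first axioms of $Q_v$ yield the left-action property of $\vee$ and the right-action property of $\wedge$. These are precisely the four equalities \eqref{th4} and \eqref{th5}.

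It remains to verify that the four identities persist on the enlarged domain $\Cal H^+ \ootimes \Cal H^+ \ootimes \Cal H^+$, where one or two of the three entries may equal $\un$. Using the unit conventions recorded at the end of Section 4, namely $\un \succ a = a \prec \un = \un \vee a = a \wedge \un = a$ and $a \succ \un = \un \prec a = a \vee \un = \un \wedge a = 0$, together with $\un \shu a = a \shu \un = a$, I would dispatch each boundary case by direct inspection: placing $\un$ in any single slot of an identity reduces both sides to the same expression, and placing $\un$ in two slots reduces both sides trivially. The pure term $\un \otimes \un \otimes \un$, for which $\un \shu \un$ would need to be interpreted outside $\Cal H^+$, is excluded by the very definition of $\ootimes$ and therefore need not be treated.

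I do not anticipate any substantive obstacle: once the coincidence $\star = \shu$ is made explicit, the proposition is essentially a direct restatement of four quadri-algebra axioms (the first and third of each column-sum and each row-sum), and the $\ootimes$-extension is a routine bookkeeping check.
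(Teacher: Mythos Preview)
Your proposal is correct and follows essentially the same route as the paper, whose proof is the single sentence ``This is immediate from the dendriform axioms.'' You simply spell out which dendriform identities from \eqref{axioms-dend-bis} (the first and third for each of $Q_h$ and $Q_v$, after the substitution $\star=\shu$) are being invoked, and you add an explicit check of the unit conventions on $\Cal H^+\ootimes\Cal H^+\ootimes\Cal H^+$ that the paper leaves implicit via the remark following \eqref{dend-ext}.
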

\begin{proof}
This is immediate from the dendriform axioms.
\end{proof}

\begin{theorem} The dendriform products $\vee$ and $\succ$ define two $(\Cal H,m)$-module-algebra structures on $\Cal H^+$, i.e. the two following diagrams are commutative: 
\diagramme{
\xymatrix{
\Cal H \otimes \Cal H^+ \otimes \Cal H^+ \ar[d]_{I\otimes\; m} \ar[rr]^{\Delta  \otimes I \otimes I} 
&&\Cal H \otimes\Cal H \otimes \Cal H^+ \otimes \Cal H^+ \ar[d]^{\tau_{23} }\\
\Cal H \otimes\Cal H^+ \ar[d]_{\vee}&&\Cal H\otimes\Cal H^+ \otimes \Cal H\otimes\Cal H^+\ar[d]^{\vee \;\otimes\;\; \vee }\\
\Cal H^+
&& \ar[ll]^{m}\Cal H^+\otimes \Cal H^+ }
}

\diagramme{
\xymatrix{
\Cal H \otimes \Cal H^+ \otimes \Cal H^+ \ar[d]_{I\otimes\; m} \ar[rr]^{\Delta  \otimes I \otimes I} 
&&\Cal H \otimes\Cal H \otimes \Cal H^+ \otimes \Cal H^+ \ar[d]^{\tau_{23} }\\
\Cal H \otimes\Cal H^+ \ar[d]_{\succ}&&\Cal H\otimes\Cal H^+ \otimes \Cal H\otimes\Cal H^+\ar[d]^{\succ \;\otimes\;\; \succ }\\
\Cal H^+
&& \ar[ll]^{m}\Cal H^+\otimes \Cal H^+ }
}
\noindent That is to say: 
\begin{equation}
m \circ (\vee \;\otimes\;\; \vee)\circ \tau_{23} \circ (\Delta  \otimes I \otimes I) = \; \vee \circ (I\otimes\; m),\label{th3}
\end{equation}
\begin{equation}
m \circ (\succ \;\otimes\;\; \succ)\circ \tau_{23} \circ (\Delta  \otimes I \otimes I) = \; \succ \circ (I\otimes\; m).\label{th3}
\end{equation}

\end{theorem}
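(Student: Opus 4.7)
The plan is to observe that the two commutative diagrams in the statement are simply the diagrammatic rewriting of the two identities
\begin{eqnarray*}
u \vee (vw) &=& \sum_{u = u^1 u^2} (u^1 \vee v)(u^2 \vee w), \\
u \succ (vw) &=& \sum_{u = u^1 u^2} (u^1 \succ v)(u^2 \succ w),
\end{eqnarray*}
for $u\in \Cal H$ and $v,w\in\Cal H^+$, where the sums are indexed by the deconcatenation coproduct $\Delta$. These are precisely items (3) and (4) of Corollary \ref{cor-one}, so no new computation is required: chasing an element $u\otimes v\otimes w$ around the upper-right path of each diagram produces $\vee$ or $\succ$ of $u$ with $vw$, while the lower-left path produces the sum $\sum (u^1\vee v)(u^2\vee w)$ or $\sum (u^1\succ v)(u^2\succ w)$, and equality of these is the content of Corollary \ref{cor-one}.

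Combined with the preceding Proposition, which provides the left-action identities
$$\vee \circ (I\ootimes \vee) = \vee \circ (\shu\ootimes I), \qquad \succ\circ (I\ootimes \succ) = \succ\circ (\shu\ootimes I),$$
these compatibility diagrams give exactly what is required to turn $\Cal H^+$, equipped with the concatenation product $m$, into a module-algebra over the shuffle bialgebra $(\Cal H,\shu,\Delta)$ for each of the two actions $\vee$ and $\succ$.

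The only point deserving a short verification is the compatibility with the conventions extending $\vee$ and $\succ$ to $\Cal H\ootimes \Cal H^+$ when a split produces a $\un$-factor: when $u^1=\un$ the term $(\un\vee v)(u\vee w)=v(u\vee w)$ appears, while when $u^2=\un$ the term $(u\vee v)(\un\vee w)=0$ is suppressed, and similarly for $\succ$, so that the indexing set in Corollary \ref{cor-one} matches exactly the arrows in the diagrams. No separate inductive argument is therefore needed; the theorem is a direct reformulation of Corollary \ref{cor-one} combined with the previous Proposition.
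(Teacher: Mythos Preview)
Your proposal is correct and follows essentially the same approach as the paper's own proof: both unfold the two paths of each diagram on a generic element $u\otimes v\otimes w$ and recognize the resulting equality as precisely items (3) and (4) of Corollary~\ref{cor-one}. Your additional remark that the preceding Proposition supplies the left-action axioms, so that the full module-algebra structure is established, is a welcome clarification that the paper leaves implicit.

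One small slip in your final paragraph: the convention is $\un\vee w=w$ (and $\un\succ w=w$), not $0$; it is $w\vee\un$ and $w\succ\un$ that vanish. Hence when $u^2=\un$ the term $(u\vee v)(\un\vee w)=(u\vee v)\,w$ is \emph{not} suppressed but genuinely contributes to the sum. This does not affect your main argument, since Corollary~\ref{cor-one} already carries the correct conventions, but the side remark should be corrected. You also have the two paths of the diagram swapped in your description (the left-down path yields $u\vee(vw)$, the right-down-left path yields the sum), though this is purely cosmetic.
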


\begin{proof}
For more details on module-algebras, see e.g. \cite[Definition 4.1.1]{M93}. To prove the commutativity of these diagrams, we will use the results of Corollary \ref{cor-one}. For $u\in \Cal H$ and $v,w\in\Cal H^+$ we have:
\begin{eqnarray*}
m \circ (\vee \;\otimes\;\; \vee)\circ \tau_{23} \circ (\Delta  \otimes I \otimes I)(u \otimes v \otimes w) &=& m \circ (\vee \;\otimes\;\; \vee)\circ \tau_{23} \left(\sum_{u = u^1 u^2} u^1 \otimes u^2 \otimes v \otimes w\right)\\
 &=& m \circ (\vee \;\otimes\;\; \vee)\left(\sum_{u = u^1 u^2} u^1 \otimes v \otimes u^2 \otimes w\right)\\
&=& \sum_{u = u^1 u^2}( u^1 \vee v)( u^2 \vee w),
\end{eqnarray*}
\noindent whereas
\begin{eqnarray*}
\vee \circ (I\otimes\; m) (u \otimes v \otimes w) &=& \vee (u \otimes v w)\\
&=& u \vee (v  w)\\
&=& \sum_{u = u^1 u^2}( u^1 \vee v) ( u^2 \vee w).
\end{eqnarray*}

\noindent We also have:
\begin{eqnarray*}
m \circ (\succ \;\otimes\;\; \succ)\circ \tau_{23} \circ (\Delta  \otimes I \otimes I)(u \otimes v \otimes w) &=& m \circ (\succ \;\otimes\;\; \succ)\circ \tau_{23} \left(\sum_{u = u^1 u^2} u^1 \otimes u^2 \otimes v \otimes w\right)\\
 &=& m \circ (\succ \;\otimes\;\; \succ)\circ \left(\sum_{u = u^1 u^2} u^1 \otimes v \otimes u^2 \otimes w\right)\\
&=& \sum_{u = u^1 u^2}( u^1 \succ v)( u^2 \succ w),
\end{eqnarray*}
\noindent whereas
\begin{eqnarray*}
\succ\circ (I\otimes\; m) (u \otimes v \otimes w) &=& \succ (u \otimes v  w)\\
&=& u \succ (v  w)\\
&=& \sum_{u = u^1 u^2}( u^1 \succ v)( u^2 \succ w).
\end{eqnarray*}
\end{proof}
\ignore{
\begin{remark}\rm
$\Cal H^+$ does not admit any module-bialgebra structure \cite{mol, ckm} on $\Cal H$ given by $\vee$ or $\succ$, because neither $(\Cal H^+,\vee)$ nor $(\Cal H^+,\succ)$ is a module-coalgebra on $\Cal H$. In other words the two diagrams below are not commutative:
\diagramme{
\xymatrix{
\Cal H \otimes \Cal H \ar[d]_{\vee} \ar[rr]^{I \otimes \Delta } 
&&\Cal H \otimes\Cal H \otimes \Cal H  \ar[d]^{\Delta \otimes I \otimes I }\\
\Cal H  \ar[d]_{\Delta}&&\Cal H\otimes\Cal H \otimes \Cal H \otimes \Cal H\ar[d]^{\tau_{23} }\\
\Cal H \otimes \Cal H
&& \ar[ll]^{\vee \;\otimes \;\;\vee}\Cal H\otimes \Cal H \otimes \Cal H\otimes\Cal H}
\;\;\; \;\;\;\xymatrix{
\Cal H \otimes \Cal H \ar[d]_{\succ} \ar[rr]^{I \otimes \Delta } 
&&\Cal H \otimes\Cal H \otimes \Cal H  \ar[d]^{\Delta \otimes I \otimes I }\\
\Cal H  \ar[d]_{\Delta}&&\Cal H\otimes\Cal H \otimes \Cal H \otimes \Cal H\ar[d]^{\tau_{23} }\\
\Cal H \otimes \Cal H
&& \ar[ll]^{\succ \;\otimes \;\;\succ}\Cal H\otimes \Cal H \otimes \Cal H\otimes\Cal H}
}
\end{remark}
}
\begin{proposition}\label{vee-shu}
The action $\vee$ makes the following diagram commute:
\diagramme{
\xymatrix{
\Cal H^+ \ootimes \Cal H^+ \ar[d]_{\vee} \ar[rrdd]^{\tau_{23}\circ(\Delta' \ootimes \Delta') } 
&&\\
\Cal H^+  \ar[d]_{\Delta'}&&\\
\Cal H \otimes \Cal H^+
&& \ar[ll]^{\sshu \ootimes \vee}\Cal H\otimes \Cal H \otimes (\Cal H^+\ootimes\Cal H^+)} 
}
where $\Delta'(u):=\Delta(u)-u\otimes\un$ for any $u\in \Cal H^+$.
\end{proposition}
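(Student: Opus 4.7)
The plan is to verify the diagram pointwise using the non-recursive shuffle description of $\vee$ recalled in the Remark preceding Theorem \ref{thm:main}: $u \vee v$ equals the sum of all shuffles $w$ of $u$ and $v$ whose last letter is the last letter of $v$. The heart of the proof will be a direct computation of $\Delta(u\vee v)$ by summing, over every such shuffle $w$ and over every cut position, the pair of prefix/suffix, then comparing with the image of $u\otimes v$ under $(\shu\ootimes\vee)\circ\tau_{23}\circ(\Delta'\ootimes\Delta')$.

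The key observation is that for any cut of $w$ with nonempty right factor, the left factor $w_L$ is an arbitrary shuffle of a prefix $u^{(1)}$ of $u$ with a prefix $v^{(1)}$ of $v$, while the right factor $w_R$ inherits the last-letter-in-$v$ constraint. This forces $v^{(2)}\neq\un$ and makes $w_R$ range over $u^{(2)} \vee v^{(2)}$, with the convention $\un\vee v^{(2)} = v^{(2)}$ when $u^{(2)}=\un$, while $w_L$ ranges over $u^{(1)}\shu v^{(1)}$. The sole cut with empty right factor produces the boundary contribution $(u \vee v)\otimes \un$. Assembling will yield
\[
\Delta(u\vee v) \;=\; (u \vee v)\otimes \un \;+\; \sum_{v^{(2)}\neq \un}(u^{(1)}\shu v^{(1)}) \otimes (u^{(2)}\vee v^{(2)}),
\]
so subtracting the boundary term immediately produces the formula for $\Delta'(u\vee v)$. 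The residual sum will then be identified with the composite above by unfolding the $\ootimes$-structure: each factor $\Delta'(w) = \sum w^{(1)}\otimes w^{(2)}$ contributes the splits with $w^{(2)}\neq\un$, and the $u^{(2)}=\un$ contributions are absorbed through the boundary of the target $\Cal H^+\ootimes\Cal H^+$ via the extension $\un\vee v^{(2)} = v^{(2)}$, while the $v^{(2)}=\un$ cases vanish by $u^{(2)}\vee\un = 0$.

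The main obstacle is the careful bookkeeping of the modified tensor product across the boundary and principal components of the domain $\Cal H^+\ootimes\Cal H^+$: one must verify that every term of the combinatorial expansion of $\Delta(u\vee v)$ is accounted for exactly once, with no spurious contributions from the formal $\un$'s introduced by $\ootimes$. The boundary inputs will be checked separately: on $u\otimes\un$ both sides vanish since $u\vee\un = 0$, while on $\un\otimes v$ both sides equal $\Delta'(v)$ since $\un\vee v = v$ and $\un\shu w = w$. This will ensure that the identity holds on the entire modified-tensor domain and not merely on its principal component $\Cal H^+\otimes\Cal H^+$.
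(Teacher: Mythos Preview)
Your proposal is correct and arrives at the same identity as the paper, but the route differs in presentation. The paper's argument is shorter: it invokes the known Hopf compatibility $\Delta(u\shu v)=\sum (u^1\shu v^1)\otimes(u^2\shu v^2)$ for the shuffle bialgebra, passes to $\Delta'$ by dropping the $(u^2,v^2)=(\un,\un)$ term, and then simply \emph{restricts both sides} to those summands whose rightmost letter lies in $v$---on the left this turns $\shu$ into $\vee$, and on the right it forces $u^2\shu v^2$ to become $u^2\vee v^2$ (with the $v^2=\un$ terms then vanishing automatically since $u^2\vee\un=0$). Your direct enumeration of cuts of each shuffle word is the same combinatorics unpacked from first principles, and your explicit verification of the $\ootimes$-boundary inputs $u\otimes\un$ and $\un\otimes v$ is a point the paper leaves implicit. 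The paper's filtering trick is more economical; your version makes the $\ootimes$ bookkeeping more transparent.
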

\begin{proof}
The compatibility of the deconcatenation with the shuffle product is written as follows:
\begin{equation}
\Delta(u\shu v)=\sum_{u=u^1u^2,\,v=v^1v^2}(u^1\shu v^1)\otimes (u^2\shu v^2)
\end{equation}
for any pure tensors $u,v\in \Cal H$. Dropping the terms with $\un$ on the right side of the tensor product yields
\begin{equation}\label{comp-shu}
\Delta'(u\shu v)=\sum_{u=u^1u^2,\,v=v^1v^2,\,(u^2,v^2)\neq(\mathbf 1,\mathbf 1)}(u^1\shu v^1)\otimes (u^2\shu v^2).
\end{equation}
Keeping only the terms of both sides of \eqref{comp-shu} with righmost letter in $v$ gives:
\begin{equation}
\Delta'(u\vee v)=\sum_{u=u^1u^2,\,v=v^1v^2,\,(u^2,v^2)\neq(\mathbf 1,\mathbf 1)}(u^1\shu v^1)\otimes (u^2\vee v^2),
\end{equation}
which proves Proposition \ref{vee-shu}.
\end{proof}
\begin{remark}\rm The following diagram also commutes:
\diagramme{
\xymatrix{
\Cal H^+ \ootimes \Cal H^+ \ar[d]_{\prec} \ar[rrdd]^{\tau_{23}\circ(\Delta'' \ootimes \Delta'') } 
&&\\
\Cal H^+  \ar[d]_{\Delta''}&&\\
\Cal H^+ \otimes \Cal H
&& \ar[ll]^{\prec \ootimes \sshu}(\Cal H^+\ootimes\Cal H^+)\otimes\Cal H\otimes \Cal H } 
}
\noindent where $\Delta''(u):=\Delta(u)-\un\otimes u$ for any $u\in \Cal H^+$. The proof is similar to the proof of Proposition~\ref{vee-shu}.
\end{remark}
\begin{remark}\rm
The infinitesimal bialgebra structure on $(\Cal H,m,\Delta)$  (in the category of vector spaces) does not give rise to an infinitesimal bialgebra structure in the category of $(\Cal H, \shu)$-modules, because $\Cal H$ is not a module-algebra on $\Cal H$. In other words the diagram below is \textsl{not} commutative:
\diagramme{
\xymatrix{
\Cal H \otimes \Cal H \otimes \Cal H \ar[d]_{I\otimes\; m} \ar[rr]^{\Delta  \otimes I \otimes I} 
&&\Cal H \otimes\Cal H \otimes \Cal H \otimes \Cal H \ar[d]^{\tau_{23} }\\
\Cal H \otimes\Cal H \ar[d]_{\sshu}&&\Cal H\otimes\Cal H \otimes \Cal H\otimes\Cal H\ar[d]^{\sshu\otimes \sshu }\\
\Cal H 
&& \ar[ll]^{m}\Cal H\otimes \Cal H }
}
\end{remark}
\noindent Let us finally make a simple restatement of Corollary \ref{theo3}.
\begin{proposition}
The two following diagrams are commutative:
\diagramme{
\xymatrix{
\Cal H \otimes \Cal H^+ \otimes \Cal H^+ \ar[d]_{I\otimes\; m} \ar[rr]^{\Delta  \otimes I \otimes I} 
&&\Cal H \otimes\Cal H \otimes \Cal H^+ \otimes \Cal H^+ \ar[d]^{\tau_{23} }\\
\Cal H \otimes\Cal H^+ \ar[d]_{\sshu}&&\Cal H\otimes\Cal H^+ \otimes \Cal H\otimes\Cal H^+\ar[d]^{\vee \;\otimes\;\; \sshu }\\
\Cal H^+ 
&& \ar[ll]^{m}\Cal H^+\otimes \Cal H^+ }\hskip 8mm
\xymatrix{
\Cal H \otimes \Cal H^+ \otimes \Cal H^+ \ar[d]_{I\otimes\; m} \ar[rr]^{\Delta  \otimes I \otimes I} 
&&\Cal H \otimes\Cal H \otimes \Cal H^+ \otimes \Cal H^+ \ar[d]^{\tau_{23} }\\
\Cal H \otimes\Cal H^+ \ar[d]_{\sshu}&&\Cal H\otimes\Cal H^+ \otimes \Cal H\otimes\Cal H^+\ar[d]^{\sshu\otimes  \succ }\\
\Cal H^+ 
&& \ar[ll]^{m}\Cal H^+\otimes \Cal H^+ }
}

which means that the concatenation $m$ is a morphism of left $\Cal H^+$-modules, where $\Cal H$ acts on $\Cal H^+$ by the shuffle product $\shu$, and $\Cal H$ acts on $\Cal H^+\otimes \Cal H^+$ either by $(\vee\otimes\shu)\circ\tau_{23}\circ (\Delta\otimes I\otimes I)$ or by $(\shu\otimes\succ)\circ\tau_{23}\circ (\Delta\otimes I\otimes I)$.
\end{proposition}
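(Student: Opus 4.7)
The plan is short: each of the two diagrams unpacks into exactly one of the two identities of Corollary \ref{theo3}, so the proof amounts to tracing the two paths of each diagram and invoking that corollary. For the first diagram, starting from a generic element $u\otimes v\otimes w\in\Cal H\otimes\Cal H^+\otimes\Cal H^+$, the upper-right-then-downward route first produces $\sum_{u=u^1u^2}u^1\otimes u^2\otimes v\otimes w$ via $\Delta\otimes I\otimes I$, becomes $\sum_{u=u^1u^2}u^1\otimes v\otimes u^2\otimes w$ after $\tau_{23}$, is sent to $\sum_{u=u^1u^2}(u^1\vee v)\otimes(u^2\shu w)$ by $\vee\otimes\shu$, and finally, after $m$, yields
$$\sum_{u=u^1u^2}(u^1\vee v)(u^2\shu w).$$
The alternative downward-then-bottom route instead gives $u\shu(vw)$. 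Equality of the two outputs is precisely the first identity of Corollary \ref{theo3}. The second diagram is handled in identical fashion, with $(u^1\shu v)(u^2\succ w)$ replacing the previous expression and matching the second identity of Corollary \ref{theo3}.

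For the closing interpretation in terms of module morphisms, I would briefly justify that the two prescriptions on $\Cal H^+\otimes\Cal H^+$ really are left $(\Cal H,\shu)$-module structures. To this end I would recall that $(\Cal H^+,\shu)$ is trivially a left $(\Cal H,\shu)$-module, while $(\Cal H^+,\vee)$ and $(\Cal H^+,\succ)$ are left $(\Cal H,\shu)$-modules by the previous proposition. Since the deconcatenation $\Delta$ is an algebra morphism from $(\Cal H,\shu)$ to $(\Cal H,\shu)\otimes(\Cal H,\shu)$ (this being the bialgebra compatibility of the shuffle Hopf algebra), the standard tensor-product-of-modules construction endows $\Cal H^+\otimes\Cal H^+$ with a left $(\Cal H,\shu)$-module structure given by $(\vee\otimes\shu)\circ\tau_{23}\circ(\Delta\otimes I\otimes I)$, and also by $(\shu\otimes\succ)\circ\tau_{23}\circ(\Delta\otimes I\otimes I)$. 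The commutativity of the two diagrams then expresses exactly that $m\colon\Cal H^+\otimes\Cal H^+\to\Cal H^+$ intertwines these two actions with the $\shu$-action on $\Cal H^+$.

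No real obstacle is expected. All the non-trivial computational work was already carried out in Theorem \ref{thm:main} and its Corollary \ref{theo3}; what remains here is purely a diagrammatic repackaging together with the routine verification that the two candidate actions on the tensor product square are indeed actions, which reduces to coassociativity of $\Delta$ and its multiplicativity with respect to $\shu$.
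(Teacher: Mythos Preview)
Your proposal is correct and matches the paper's approach: the paper explicitly introduces this proposition as ``a simple restatement of Corollary \ref{theo3}'' and gives no further proof, so your diagram chase reducing each diagram to one of the two identities of that corollary is exactly what is intended. Your additional paragraph verifying that the two tensor-product actions are genuine $(\Cal H,\shu)$-module structures goes slightly beyond what the paper spells out, but it is a welcome clarification and follows routinely from the cited proposition and the bialgebra compatibility, as you indicate.
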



\begin{thebibliography}{abcdsfgh}
{\small{
\bibitem{ma} M. Aguiar, \textsl{Infinitesimal bialgebras, pre-Lie and dendriform algebras, in Hopf algebras}, Lecture Notes in Pure and Applied Mathematics \textbf{237}, 1-33 (2004).
\bibitem{al} M. Aguiar, J. L. Loday, \textsl{Quadri-algebras}, J. Pure Appl. Algebra \textbf{191} No. 3, 205-221 (2004).
\bibitem{ckm} D. Calaque, K. Ebrahimi-Fard, D. Manchon, \textsl{ Two interacting Hopf algebras of trees: a Hopf-algebraic approach to composition and substitution of B-series}, Advances in Applied Mathematics \textbf{47} No. 2, 282-308 (2011).
\bibitem{ght} G. Duchamp, F. Hivert, and J. Thibon, \textsl{Noncommutative symmetric functions. VI. Free quasi-symmetric functions and related algebras}, Internat. J. Algebra Comput.
\textbf{12} No. 5, 671-717 (2002).
\bibitem{kd} K.~Ebrahimi-Fard, D.~Manchon, {\textsl{A Magnus- and Fer-type formula in dendriform algebras\/}}, Found. of Comput. Math. \textbf{9} No. 3, 295-316 (2009).
\bibitem{kudo} K.~Ebrahimi-Fard, D.~Manchon, {\textsl{Dendriform Equations}}, J. Algebra \textbf{322}, 4053-4079 (2009).
\bibitem{emp} K. Ebrahimi-Fard, D. Manchon, F. Patras, \textsl{New identities in dendriform algebras}, J. Algebra \textbf{320}, 708-727 (2008).
\bibitem{fol} L. Foissy, \textsl{Bidendriform bialgebras, trees, and free quasi-symmetric functions}, J. Pure
Appl. Algebra \textbf{209} No. 2, 439-459 (2007).
\bibitem{folo} L. Foissy, \textsl{Free quadri-algebras and dual quadri-algebras},	arXiv: math.CO/1504.06056.
\bibitem{fp} L. Foissy and F. Patras, \textsl{Natural endomorphisms of shuffle algebras}, Internat. J. Algebra Comput. \textbf{23} No. 4, 989-1009 (2013).
\bibitem{jll} J. L. Loday, \textsl{Dialgebras, Dialgebras and Related Operads}, Lecture Notes in Mathematics \textbf{1763} pp. 7-66 Springer-Verlag (2001).
\bibitem{jllr} J. L. Loday and M. O. Ronco, \textsl{Hopf algebra of the planar binary trees}, Adv. Math. \textbf{139} No. 2, 293-309
(1998).
\bibitem{lr} J. L. Loday and M. O. Ronco, Order structure on the algebra of permutations and of
planar binary trees, J. Alg. Combinatorics \textbf{15}, 253-270 (2002).
\bibitem{mol} R. K. Molnar, \textsl{Semi-Direct Products of Hopf Algebras}, J. Algebra \textbf{47}, 29-51 (1977).
\bibitem{M93}S. Montgomery, \textsl{Hopf algebras and their actions on rings}, Regional Conference Series in Mathematics \textbf{82}, CMBS/Amer. Math. Soc., 238 p. (1993).
\bibitem{re} C. Reutenauer, \textsl{Free Lie Algebras}, Oxford University Press, New York (1993).
\bibitem{mr} M. O. Ronco, \textsl{Primitive elements in a free dendriform algebra, New trends in Hopf algebra
theory} (La Falda, 1999), 245-263, Contemp. Math., \textbf{267}, Amer. Math. Soc., Providence, RI, 2000.
}}
\end{thebibliography}
\end{document}